\documentclass{emsprocart}

\usepackage[unicode=true,pdfborder={0 0 0},colorlinks=true,pdfborderstyle={},linkcolor=black,citecolor=black,urlcolor=blue]{hyperref}
\usepackage{calrsfs}
\usepackage{graphicx}
\usepackage{color}
\usepackage{auto-pst-pdf}


\contact[gadyk@weizmann.ac.il]{Gady Kozma, Department of Mathematics,
  Weizmann, Rehovot 76100, Israel.}





\theoremstyle{plain}
\newtheorem{thm}{Theorem}

\DeclareMathOperator{\len}{len}
\DeclareMathOperator{\dist}{dist}

\title{Reinforced random walk}

\author[Gady Kozma]{Gady Kozma\thanks{Research partially
    supported by the Israel Science Foundation}}

\begin{document}

\begin{abstract}
We show that linearly reinforced random walk has a recurrent phase in
every graph. The proof does not use the magic formula. 
\end{abstract}

\begin{classification}
Primary 60K35; Secondary 60K37, 60G09, 60J10.
\end{classification}

\begin{keywords}
Linearly reinforced random walk, random walk in random environment,
partial exchangeability, P\'olya urn.
\end{keywords}

\maketitle

\section{Introduction}
Let $f$ be some function from the positive integers $\{0,1,2,\dotsc\}$
into $(0,\infty)$, and let $G$ be a graph and $x_{0}$ some vertex
of it. Then we define \emph{reinforced random walk} on $G$ starting
at $x_{0}$ with reinforcement function $f$ as follows. It is a walk
on $G$ i.e.\ a random sequence of vertices $x_{0},x_{1},\dotsc$
with $x_{i+1}$ a neighbour of $x_{i}$; and the transition probabilities
depend only on the past. Specifically, for an edge $e$ and time $t$
we define the number of traversals $N(e,t)$ using
\[
N(e,t)=\#\{s\mbox{ such that }1\le s\le t,\:(x_{s-1},x_{s})=e\}
\]
where $(x_{s-1},x_{s})$ is an unordered edge, so $N$ counts traversals
of $e$ in both directions. Now apply $f$ and normalize to get probabilities.
Totally, the definition is
\[
\mathbb{P}(x_{t+1}=x\,|\, x_{0},\dotsc,x_{t})=\frac{f(N((x,x_{t}),t))}{\sum\limits _{\smash{y\sim x_{t}}}f(N((y,x_{t}),t)}
\]
where $\sim$ is the neighbourhood relation in our graph. This finishes
the definition of the process. This is well defined assuming all degrees
of all vertices are finite, and we assume this from now on on all
our graphs. There are also versions where reinforcement is applied
to vertices or to directed edges, but for simplicity we concentrate
for now on edge reinforced walks.

It comes as no surprise that different $f$ give different processes,
sometimes dramatically so. To make things more concrete let us discuss
a few examples.
\begin{enumerate}
\item Once-reinforced random walk is the process defined by 
\[
f(n)=\begin{cases}
1 & n=0\\
2 & n\ge1
\end{cases}.
\]
In other words, edges which have already been traversed are given
double weight when compared to unknown edges, but the number of
traversals does not matter.
\item Linearly reinforced random walk is the process defined by $f(n)=1+n$.
In other words, whenever you traverse an edge, you increase its weight
by $1$. 
\item Strongly reinforced random walk, where $f$ grows faster than linear.
For example, one may take $f(n)=1+n^{2}$.
\end{enumerate}
Perhaps surprisingly, the order of difficulty seems to be decreasing.
Once-reinforced random walk is the least understood, while the superlinear
case is the simplest.

Let us start with the superlinear case. It turns out that in this
case the walk gets stuck on a single edge, going back and forth endlessly.
To understand why, let us prove the following weaker claim:
\begin{thm}
Let $G$ be an arbitrary graph, $x_{0}$ a vertex of $G$ and $e$
an edge going out of $x$. Examine reinforced random walk on $G$
starting from $x_{0}$ with reinforcement function $1+n^{2}$. Then
with positive probability the process goes back and forth over $e$
endlessly i.e.\ for all $t$, $x_{2t}=x_{0}$ and $x_{2t+1}$ is
the other vertex of $e$.\end{thm}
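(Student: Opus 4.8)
The plan is to estimate directly the probability of the event $A$ described in the statement, that the walk bounces forever on $e$ between $x_{0}$ and the other endpoint of $e$, which I will call $v$. For $m\ge 0$ let $A_{m}$ be the event that the first $m$ steps of the walk all traverse $e$; thus $A_{0}$ is the whole probability space, the $A_{m}$ are nested and decreasing, and $A=\bigcap_{m}A_{m}$. By continuity of a probability measure from above, $\mathbb{P}(A)=\lim_{m}\mathbb{P}(A_{m})$, and since $\mathbb{P}(A_{m})=\prod_{j=0}^{m-1}\mathbb{P}(A_{j+1}\mid A_{j})$ whenever the factors are positive, it suffices to prove that $\prod_{m\ge 0}\mathbb{P}(A_{m+1}\mid A_{m})$ is strictly positive (the positivity of the factors, established below, is also what makes each conditioning legitimate).

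Next I would compute the individual factors. On the event $A_{m}$ the walk sits at time $m$ at $x_{0}$ if $m$ is even and at $v$ if $m$ is odd, the edge $e$ has been traversed exactly $m$ times, and every other edge incident to the current vertex has been traversed zero times. Writing $d_{m}$ for the degree of the vertex occupied at time $m$ and recalling $f(n)=1+n^{2}$, the definition of the process gives
\[
\mathbb{P}(A_{m+1}\mid A_{m})=\frac{f(m)}{f(m)+(d_{m}-1)f(0)}=\frac{1+m^{2}}{1+m^{2}+(d_{m}-1)}=\frac{1+m^{2}}{m^{2}+d_{m}}.
\]
Put $D=\max\{\deg(x_{0}),\deg(v)\}$. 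Since $1\le d_{m}\le D$, each factor is a number in $(0,1]$ bounded below by $(1+m^{2})/(m^{2}+D)=1-(D-1)/(m^{2}+D)$.

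Finally I would verify the product converges to a positive number. All factors are strictly positive, so the product is positive iff $\sum_{m\ge 0}-\log\mathbb{P}(A_{m+1}\mid A_{m})<\infty$; by the lower bound just obtained it is enough that $\sum_{m\ge 1}-\log\bigl(1-(D-1)/(m^{2}+D)\bigr)<\infty$. For all but finitely many $m$ one has $(D-1)/(m^{2}+D)\le\tfrac12$, and on $[0,\tfrac12]$ the inequality $-\log(1-x)\le 2x$ holds, so the tail of that sum is dominated by $2(D-1)\sum 1/m^{2}<\infty$; the finitely many remaining terms are finite as well. Hence $\prod_{m\ge 0}\mathbb{P}(A_{m+1}\mid A_{m})>0$ and therefore $\mathbb{P}(A)>0$.

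There is no serious obstacle here; the content of the argument is simply that $f(n)=1+n^{2}$ grows fast enough that $\sum_{n}1/f(n)<\infty$, which turns the probability of bouncing forever into an absolutely convergent infinite product instead of a divergent one. In fact the same computation shows that \emph{any} reinforcement function $f$ with $\sum_{n}1/f(n)<\infty$ has this ``getting stuck on a single edge with positive probability'' property, which explains why the superlinear regime is the easy one.
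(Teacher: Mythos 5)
Your proposal is correct and follows essentially the same route as the paper: condition on the event that the first $m$ steps all traverse $e$, compute the one-step conditional probability $\frac{1+m^{2}}{1+m^{2}+(d_{m}-1)}$, and observe that the resulting infinite product converges to a positive limit because $\sum 1/m^{2}<\infty$. Your treatment is slightly more explicit about the limiting argument (continuity from above and the logarithmic test for product convergence), but the content is identical.
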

\begin{proof}
Denote the other vertex of $e$ by $y$. Let $E_{s}$ be the event
that $x_{2t}=x_{0}$ and $x_{2t+1}=y$ for all $2t\le s$ and $2t+1\le s$,
respectively. Let us calculate the probability of $E_{s+1}$ given
$E_{s}$. The numbers of traversals are $N(e,s)=s$ and $N(f,s)=0$
for any edge $f\ne e$. Hence the probability is 
\[
\frac{f(s)}{f(s)+(\deg x_{s}-1)f(0)}=\frac{1+s^{2}}{1+s^{2}+(\deg x_{s}-1)}\ge1-\frac{C}{s^{2}}.
\]
for some constant $C$ that depends only on the degrees of $x_{0}$
and $y$. Therefore the probability of $E_{s}$ is 
\[
\mathbb{P}(E_{s})=\prod_{u=1}^{s}\mathbb{P}(E_{u}|E_{u-1})\ge\prod_{u=1}^{s}\left(1-O\left(\frac{1}{u^{2}}\right)\right)\ge c
\]
for some positive $c$. This proves the claim.
\end{proof}
There is no zero-one law for the behaviour of processes like reinforced
random walk, so one cannot easily conclude from the theorem above
that some event of interest happens with probability 1. Nevertheless,
this has been proved with almost complete generality. 
\begin{thm}
Let $G$ be a graph and $f$ a function satisfying 
\[
\sum_{n=0}^{\infty}\frac{1}{f(n)}<\infty.
\]
Then reinforced random walk on $G$ with reinforcement function $f$
eventually gets stuck on a single edge with probability $1$, under
either of the following conditions
\begin{enumerate}
\item $f$ is increasing, or
\item $G$ contains no cycle of odd order.
\end{enumerate}
\end{thm}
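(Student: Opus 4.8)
The plan is to upgrade the "positive probability" statement of the previous theorem into a "probability one" statement by a localization argument. The basic principle is that a reinforced random walk with $\sum 1/f(n) < \infty$ has, at every vertex it visits, a positive conditional probability of getting permanently stuck on one of the incident edges before ever leaving a bounded neighbourhood; if one can show this positive probability is bounded below away from zero (uniformly over the relevant pasts), then a Borel--Cantelli / L\'evy zero-one argument forces the stuck event to occur almost surely. The subtlety is that the conditional probability of getting stuck at a vertex $v$ depends on the current traversal counts of the edges at $v$, which may be large; but the key observation is that, once the walk is at $v$ and we condition on it oscillating back and forth along a fixed edge $e=(v,w)$, the count $N(e,t)$ only grows and the competing counts stay fixed, so exactly the computation in the proof of the previous theorem applies with the initial count shifted, and shifting the initial count only \emph{increases} the probability of getting stuck (since $\sum_{n\ge k} 1/f(n)$ decreases in $k$). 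So from any reachable configuration, the walk localizes on the very next edge it traverses twice in a row with probability bounded below by a positive constant depending only on the degrees involved.

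The steps I would carry out, in order, are as follows. First, fix a vertex $v$ and an incident edge $e=(v,w)$, and show: conditioned on any past with $x_t = v$, the probability that the walk thereafter oscillates on $e$ forever is at least $\prod_{u \ge N(e,t)} (1 - C/f(u)) > 0$, where $C$ depends only on $\deg v$ and $\deg w$ and the relevant current counts (and in the worst case just on the degrees, using monotonicity of $f$ or the even-cycle hypothesis to control the competing weights at $w$). Call this probability $\rho(v,w) > 0$. Second, argue that the walk is recurrent-or-localized: either it gets stuck, or it visits some vertex infinitely often (this is automatic on a locally finite graph if the walk does not escape to infinity, but one must also rule out transience — here is where either hypothesis (i) or (ii) should be used to get a handle, most cleanly via the known reversibility/partial-exchangeability structure or by a direct argument that a non-localized walk returns to a bounded set infinitely often). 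Third, combine: if the walk visits $v$ infinitely often, then at infinitely many of those visits it has an independent-ish chance $\ge \rho(v, \cdot)$ of localizing; by the conditional Borel--Cantelli lemma (L\'evy's extension), one of these succeeds almost surely on the event that $v$ is visited infinitely often. Hence the walk localizes almost surely.

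The genuine obstacle is the second step — ruling out that the walk "escapes to infinity" without localizing — because this is exactly where the two hypotheses (i) $f$ increasing and (ii) $G$ bipartite come in, and the two cases presumably require different treatments. Under (i), monotonicity of $f$ lets one couple or dominate the walk's exit behaviour and argue that to travel far the walk must cross many fresh edges, each crossing "costing" a factor that, summed against $\sum 1/f < \infty$, is too expensive to do infinitely often; one wants a statement like "the total number of distinct edges ever traversed is finite a.s." Under (ii), the absence of odd cycles means that around any vertex the competing reinforced weights cannot feed back in a way that defeats the localization estimate, so the same $\rho(v,w)$ bound survives with $C$ depending only on degrees, and then the recurrence-to-a-bounded-set fact can be had more cheaply (e.g.\ on a bipartite graph one can analyze the induced urn-type process at a vertex without the odd-cycle pathology). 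In both cases the heart of the matter is converting the summability $\sum 1/f(n) < \infty$ into a uniform-in-past lower bound on the localization probability and then showing the walk returns often enough for that lower bound to bite; I expect the bipartite case to be the technically cleaner one and the increasing-$f$ case to require the more delicate escape-prevention estimate.
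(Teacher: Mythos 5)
The first thing to say is that the paper does not prove this theorem at all: it is stated as a known result and attributed to Limic \cite{L03} and Limic--Tarr\`es \cite{LT07,LT08}, which are three substantial research papers. So there is no ``paper's own proof'' to compare against, and your sketch should be judged on its own merits as an outline of a genuinely hard theorem.

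Judged that way, it has a real gap at its core, beyond the second step that you yourself flag as open. Your first step claims a lower bound $\rho(v,w)>0$ on the probability of localizing on $e=(v,w)$ at a visit to $v$, ``uniform over the relevant pasts'' and depending in the worst case only on degrees. This is false. The monotonicity you invoke (shifting the initial count of $e$ only helps) controls the count on $e$ itself, but the obstruction is the \emph{competing} edges at $v$ and $w$: if some other edge $e'$ at $v$ has been traversed $M$ times, the probability of choosing $e$ even once is of order $f(N(e,t))/f(M)$, which tends to $0$ as $M\to\infty$ (and $\sum 1/f<\infty$ forces $f$ to be unbounded along a subsequence). So at successive visits to $v$ the localization probabilities need not be bounded below, their sum can converge, and the conditional Borel--Cantelli step does not close. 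The standard way around this in the cited literature is Rubin's construction: embed the walk into continuous time with independent exponential clocks on the edge--count pairs, so that the summability $\sum 1/f(n)<\infty$ becomes finiteness of the total clock time on an edge, and the competition between edges becomes a race between independent finite random times. That device, not a per-visit lower bound, is what converts the hypothesis into almost-sure attraction. Separately, your reading of hypothesis (ii) as helping with ``recurrence to a bounded set'' misses its actual role: the absence of odd cycles is a parity condition that rules out the walk cycling forever among the edges of an odd cycle (e.g.\ a triangle) without settling on one edge --- this is precisely the point of \cite{LT08} --- whereas hypothesis (i) is what lets one run the attraction argument without that combinatorial obstruction. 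So the outline identifies roughly where the difficulties live but does not supply the mechanism that resolves either of them.
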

See Limic \cite{L03} and Limic and Tarr\`es \cite{LT07,LT08}. It
is conjectured that neither condition is necessary, but clearly, the
result is quite complete as is.

\section{Phase transitions}

Let us now consider once-reinforced random walk, but let us add a
parameter $a$ and examine the reinforcement function 
\[
f(n)=\begin{cases}
1 & n=0\\
1+a & n\ge1.
\end{cases}
\]
An exciting conjecture due to Beffara \& Sidoravicius is that, say for the graph
$\mathbb{Z}^{d}$, $d\ge 3$, there is a \emph{phase transition} in
$a$. For $a$ very small the walk is essentially simple random walk.
For $a$ very large the walk looks like a man trapped in a balloon:
it constantly jumps against the boundaries of the balloon, occasionally
inflating it, but the balloon keeps its round shape, as it inflates.
The ``balloon'', of course, is the set of visited sites. In particular
for $d\ge 3$ there should be a phase transition between recurrence and
transience. Let us stress
that $a$ does not depend on $t$. It is not difficult to establish
results if one first fixes $t$ and then makes $a$ very small or
very large. In this case for $a\ll\frac{1}{t}$ one can directly check
that the process is identical to simple random walk (with high probability),
while for $a\gg t$ the process gets stuck on its first edge. But
the conjecture is that for a fixed $a$ there is a marked difference
in behaviour as $t\to\infty$. Some simulation results can be seen
in figure \ref{fig:Once}. 
\newdimen\figwid
\figwid=0.48\columnwidth
\begin{figure}
\centering\begin{minipage}[t]{\figwid}%
\includegraphics[width=1\textwidth]{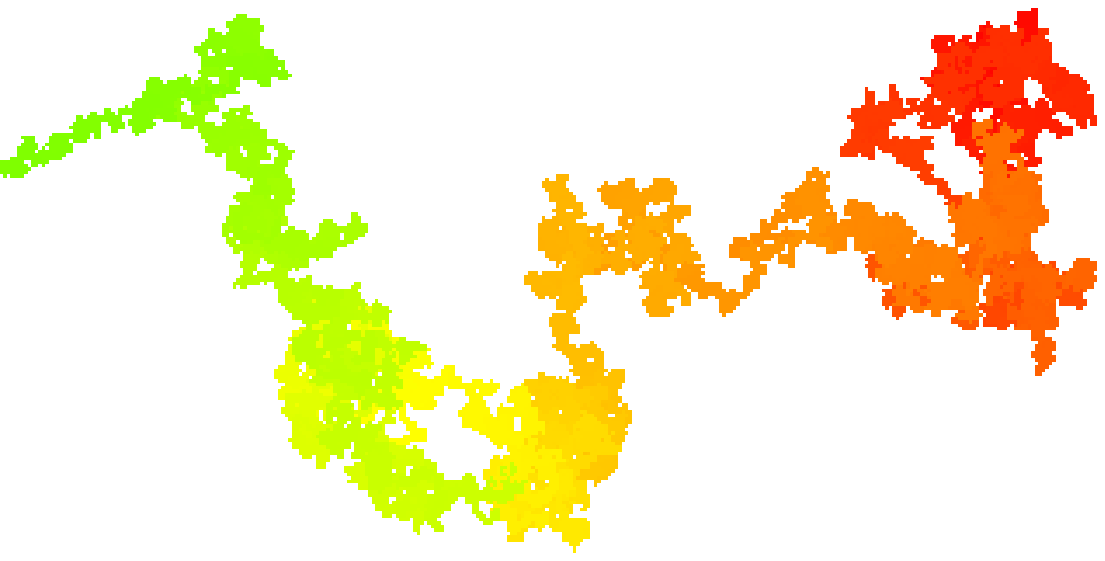}

\begin{center}
$1+a=2$
\par\end{center}%
\end{minipage}%
\begin{minipage}[t]{\figwid}%
\includegraphics[width=1\textwidth]{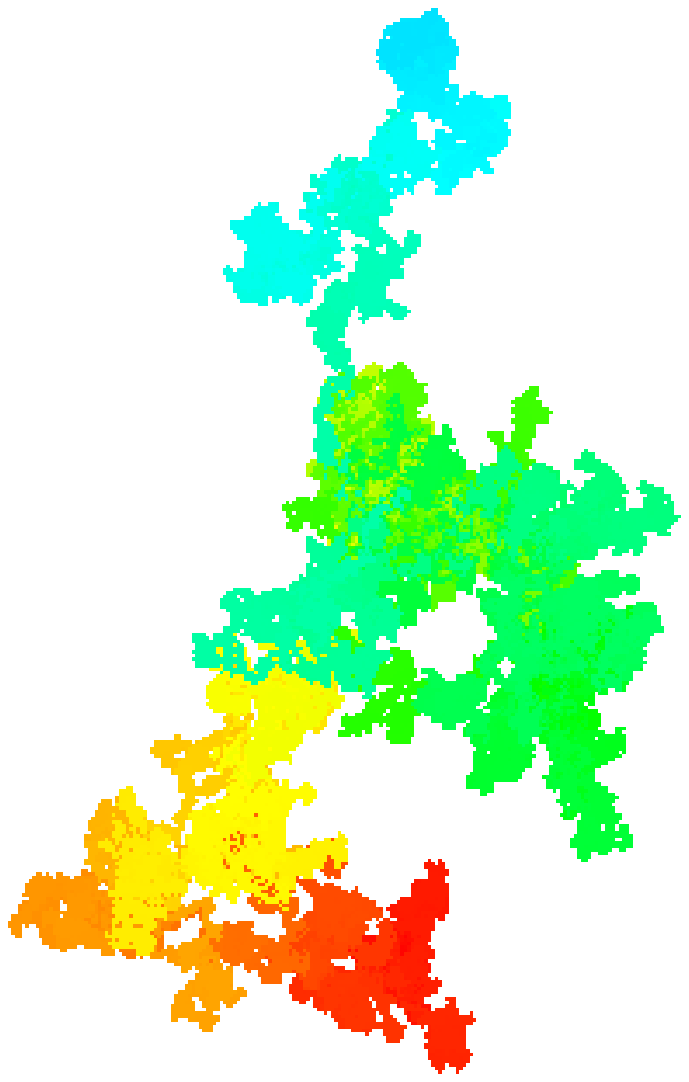}

\begin{center}
$1+a=3$
\par\end{center}%
\end{minipage}

\begin{minipage}[t]{\figwid}%
\includegraphics[width=1\textwidth]{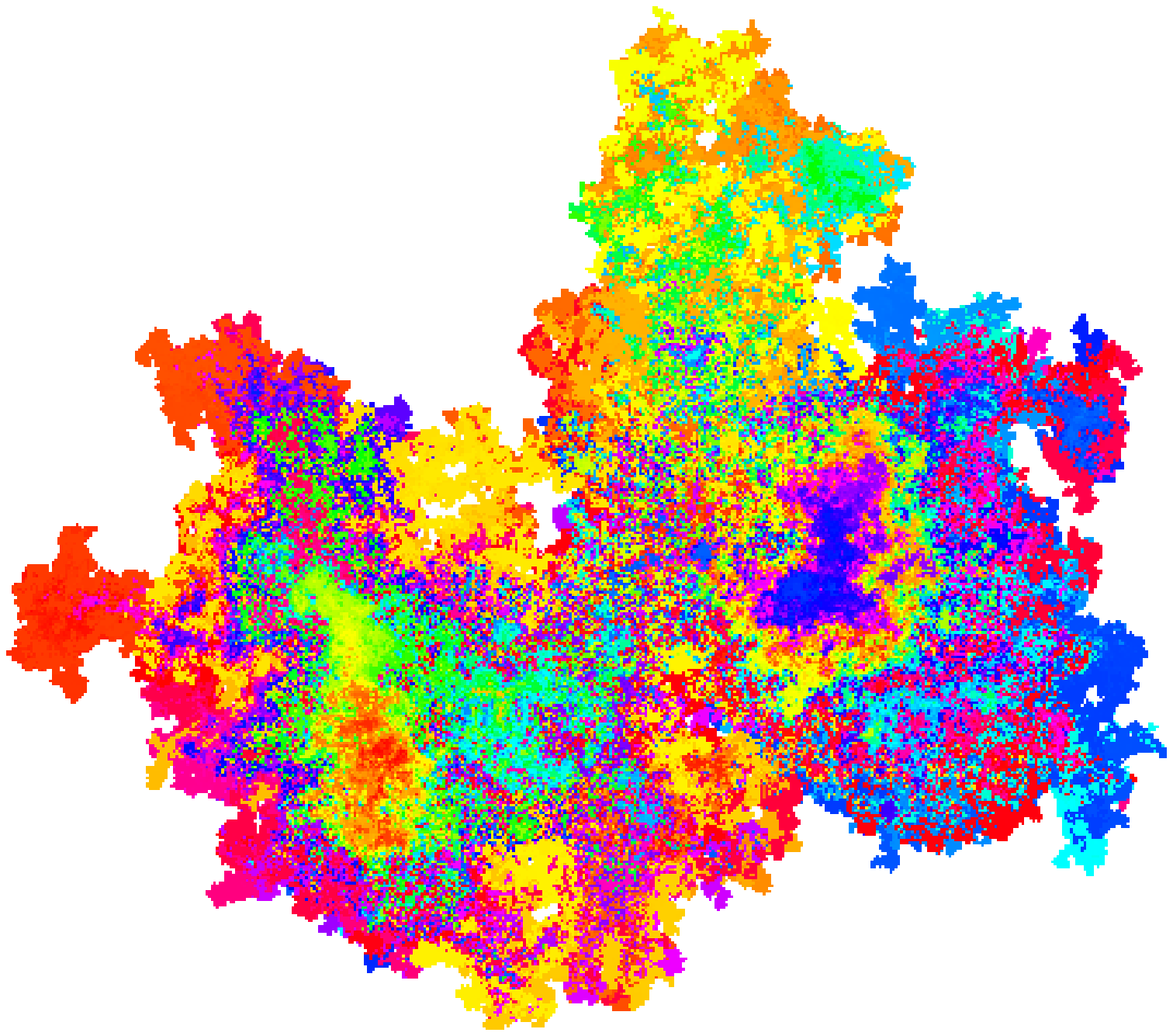}

\begin{center}
$1+a=4$
\par\end{center}%
\end{minipage}%
\begin{minipage}[t]{\figwid}%
\includegraphics[width=1\textwidth]{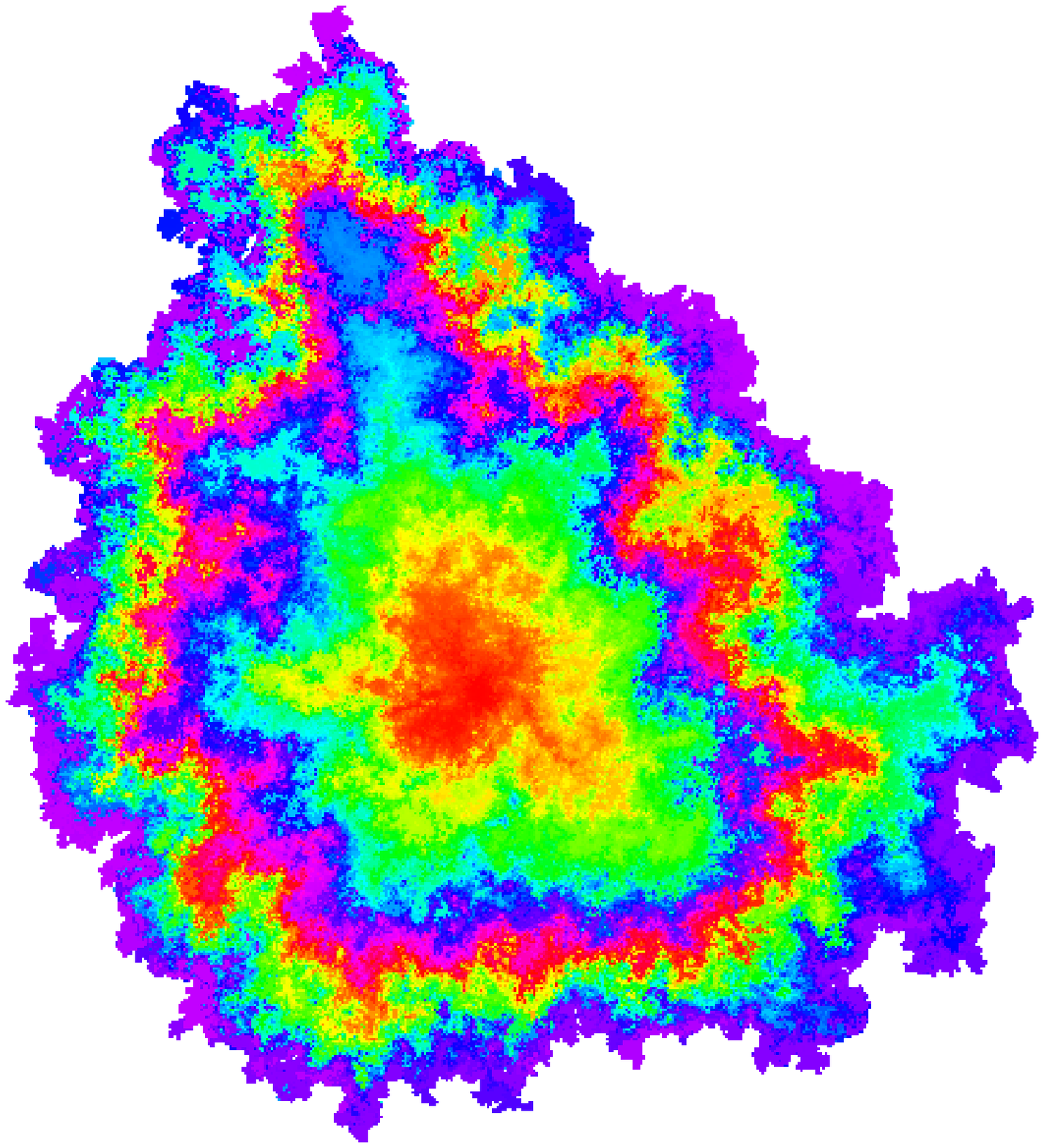}

\begin{center}
$1+a=5$
\par\end{center}%
\end{minipage}

\begin{minipage}[t]{\figwid}%
\includegraphics[width=1\textwidth]{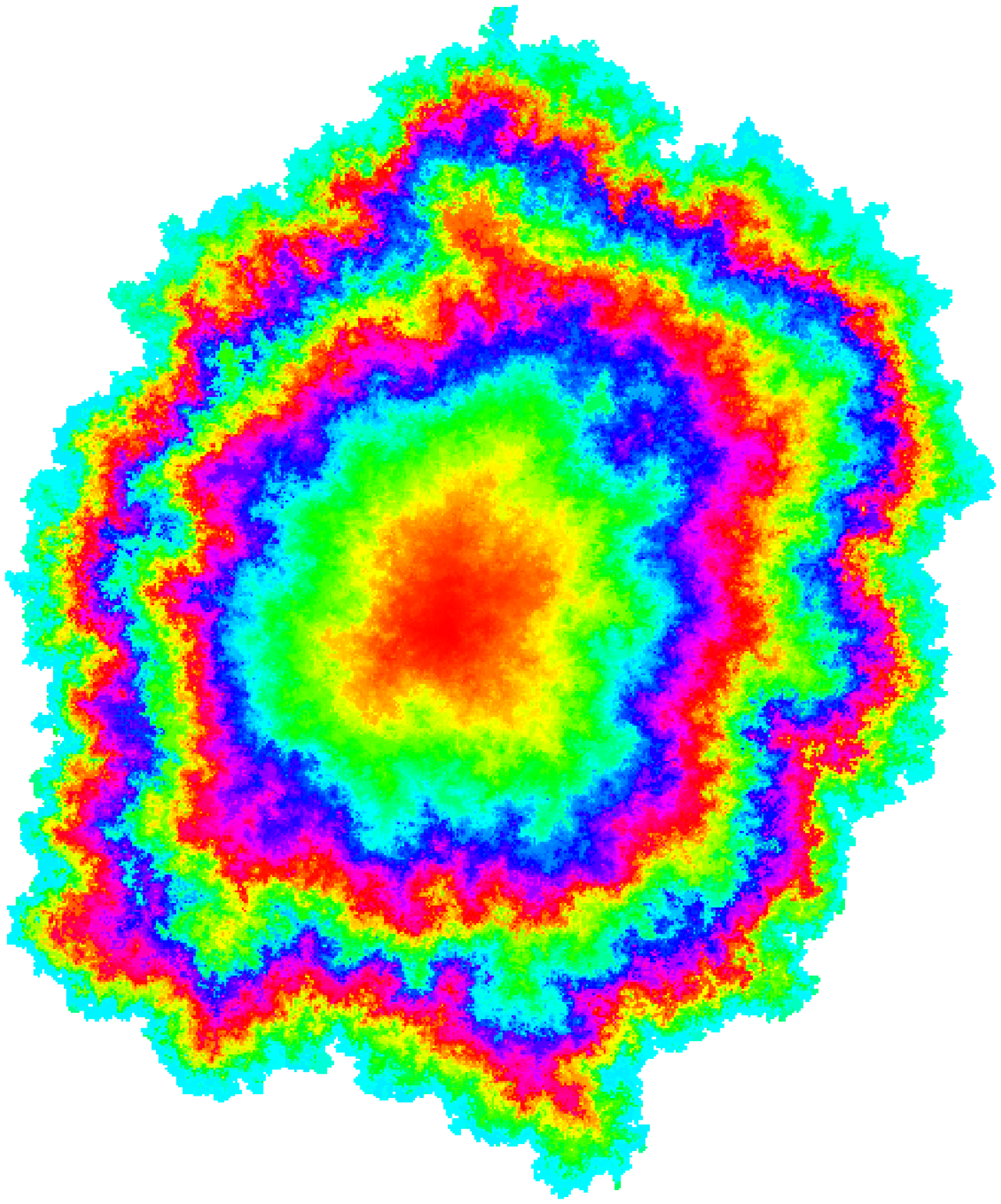}

\begin{center}
$1+a=6$
\par\end{center}%
\end{minipage}%
\begin{minipage}[t]{\figwid}%
\includegraphics[width=1\textwidth]{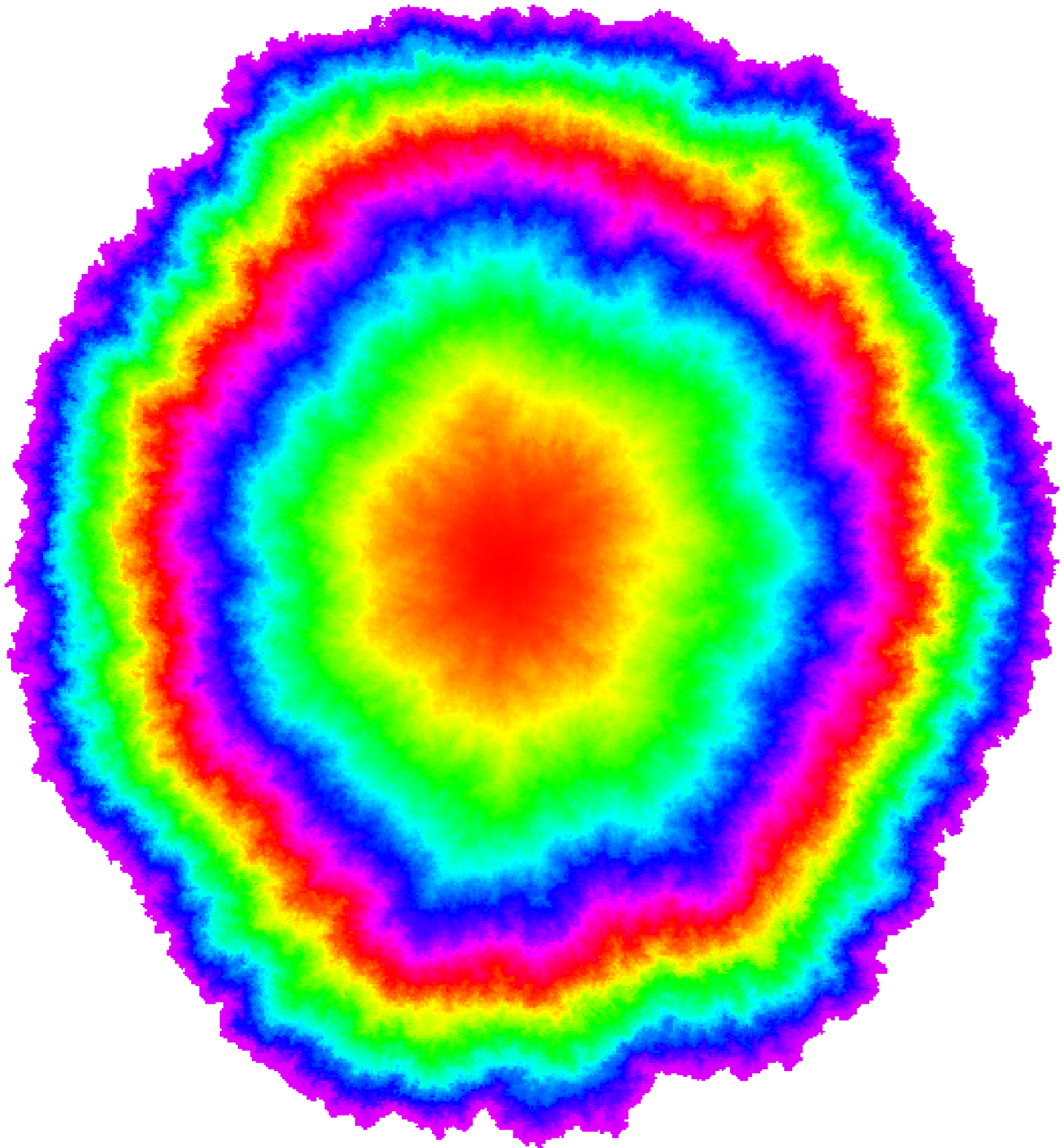}

\begin{center}
$1+a=10$
\par\end{center}%
\end{minipage}

\caption{\label{fig:Once}Once-reinforced random walk}
\end{figure}
 Each simulation was done on a $5000\times5000$ grid and stopped
when the process reached the boundaries. The 2-dimensional case  was chosen
since it is easiest to depict, but one should probably caution
that in fact it is not clear what is the true behaviour in 2 dimensions,
even on a conjectural level. Colour depicts the number
of reinforced edge. This means that the second edge which is reinforced
is given colour $2$, not the second edge which is traversed (colour
change rate is not identical in the 6 images). The fractal-like colour
patterns seen in $1+a=4$ are indicative of the process leaving holes
all around and then going back to fill them at a macroscopically later
time. Presumably $4$ is quite close to the point of criticality where
the processes of roughening and smoothing balance exactly. 

The case of large $a$ in this conjecture is related to a series of
similar models. Internal diffusion-limited aggregation is a process
where the walker, upon reaching a new vertex, is transported to the
starting point $x_{0}$. For this model it is known since the early
90s that the set of visited vertices is spherical \cite{LBG92,L95},
with impressive improvements in the precision of the results achieved
lately \cite{JLS12,JLS11,AG11}. A variation where, when the walker,
upon reaching a new vertex is transported to a random vertex in the
existing cluster also gives a ball \cite{BDCKL}. Getting closer to
once-reinforced random walk we may examine \emph{excited to the center}.
In this model the walker, upon getting to a new vertex gets a small
drift towards the center. Less is known about this problem, basically
only that the process is recurrent in any dimension, regardless of
the strength of the drift {[}unpublished, 2006{]}. Once-reinforced
has all the difficulties of excited to the center and more: there
is no preferred direction and there is a phase transition. Hence it
seems quite challenging at this point. Existing rigorous results on
the model are on a tree \cite{DKL02}, and it is not difficult to
solve the one-dimensional case, but neither case exhibits a phase
transition, so for now this phenomenon is beyond reach.

Let us close this comparison by returning to the simulation results
above. As can be seen, the shapes for $1+a=5$ or $6$ are not that
round. This is unlike internal diffusion-limited aggregation where
even quite small simulations are very close to a ball. A similar phenomenon
can be seen in simulations of excited to the center done by Yadin.
This is simply indicative that the boundary smoothing mechanism is
far less efficient in these models.

\section{P\'olya's urn}

We now move to the main topic of this talk, linearly reinforced random
walk, LRRW for short. Naively one might conjecture that, since $\sum\frac{1}{n}$
diverges so slowly, maybe the behaviour of LRRW is not that different
from the behaviour of strongly reinforced random walk. Let us start
with a simple example that shows that this is not at all the case.
The example will be a graph with $3$ vertices connected in a line,
and $x_{0}$ being the middle vertex. The reinforcement function will
be $2+n$. With this weight, the process is equivalent to the classic
P\'olya urn. Recall that in the P\'olya urn model one repeatedly
takes out a ball, and then returns two balls of the same colour as
the ball you took out. Looking at the weights of the LRRW at time
$2t$ (i.e.\ the values of $f(N(e,2t))$), and dividing them by $2$,
one gets a P\'olya urn. Let us recall P\'olya's theorem \cite{P30}\newdimen\wide
\newdimen\high
\wide=10.4cm
\high=2.2cm
\parshape 8 0pt \hsize 0pt \hsize 0pt \hsize 0pt \hsize 0pt \hsize 0pt \wide 0pt \wide 0pt \hsize
\vadjust{\kern -\high \vtop to \high{\hbox to \hsize{\hss\includegraphics{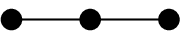}}}}
\begin{thm}
\label{thm:polya}At time $t$ (i.e.\ when the urn contains $t+1$
balls), the probability that it contains $k$ black balls is exactly
$\frac{1}{t}$, for any value of $k$ between $1$ and $t$.\end{thm}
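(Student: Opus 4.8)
The plan is to prove the statement by induction on $t$, reading off the distribution of the number of black balls directly from the urn dynamics. Write $p_t(k)$ for the probability that the urn contains exactly $k$ black balls at time $t$, i.e.\ when it holds $t+1$ balls in all. The base case $t=1$ is immediate: the urn starts with one black and one white ball, so the only possibility is $k=1$, and $p_1(1)=1=\frac1t$; there is nothing further to check.

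For the inductive step I would write down the one-step recursion. At time $t$ the urn has $t+1$ balls; if it currently has $j$ black balls, a black ball is drawn with probability $j/(t+1)$ and a white one with probability $(t+1-j)/(t+1)$, and the drawn colour then gains a ball. Conditioning on the state at time $t$ therefore gives
\[
p_{t+1}(k)=\frac{k-1}{t+1}\,p_t(k-1)+\frac{t+1-k}{t+1}\,p_t(k),
\]
valid for all $k$ once we adopt the conventions $p_t(0)=p_t(t+1)=0$ (there is always at least one ball of each colour from the start, and only $t+1$ balls at time $t$). Substituting the inductive hypothesis $p_t(j)=\frac1t$ for $1\le j\le t$, the two coefficients on the right add up to $\frac{(k-1)+(t+1-k)}{t+1}=\frac{t}{t+1}$, so $p_{t+1}(k)=\frac1t\cdot\frac{t}{t+1}=\frac1{t+1}$ for $2\le k\le t$; the extreme values $k=1$ and $k=t+1$ fall out of the same computation once the vanishing conventions are in force. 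This closes the induction and proves that $k$ is uniform on $\{1,\dots,t\}$.

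An alternative, perhaps more in keeping with the exchangeability theme of the talk, is to sum over draw histories: every sequence of $t-1$ draws that ends with $k$ black balls has the same probability $\frac{(k-1)!\,(t-k)!}{t!}$ — the denominator being the product $2\cdot 3\cdots t$ of urn sizes, and the numerator factoring according to the running colour counts — while the number of such sequences is $\binom{t-1}{k-1}$, whence $p_t(k)=\binom{t-1}{k-1}\frac{(k-1)!\,(t-k)!}{t!}=\frac1t$. In either approach there is no real obstacle; the only place demanding a little care is the bookkeeping at the boundary values $k=1$ and $k=t$ (respectively, checking that the binomial coefficient cancels the sequence probability exactly), which is why I would fix the vanishing conventions explicitly before running the recursion.
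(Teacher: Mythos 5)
Your inductive argument is exactly the paper's proof, just indexed as a step from $t$ to $t+1$ rather than from $t-1$ to $t$: the same one-step recursion, the same cancellation $\frac{(k-1)+(t+1-k)}{t+1}=\frac{t}{t+1}$, and the same boundary bookkeeping via the convention that $0$ or $t+1$ black balls have probability zero. Both it and your alternative direct count over draw histories (which correctly gives $\binom{t-1}{k-1}\cdot\frac{(k-1)!\,(t-k)!}{t!}=\frac1t$) are sound.
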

\begin{proof}
Assume inductively that the claim has been proved for $t-1$. Now,
to get $k$ black balls at time $t$ you need one of the following:
\begin{enumerate}
\item Have $k-1$ black balls at time $t-1$ and take a black ball out of
the bin; or
\item Have $k$ black balls at time $t-1$ and take a white ball out of
the bin.
\end{enumerate}
The probability of (1) is 
\[
\frac{1}{t-1}\cdot\frac{k-1}{t}
\]
where the term $1/(t-1)$ is our inductive assumption, and the term
$(k-1)/t$ is the probability to pick a black ball when you have $k-1$
black balls out of a total of $t$ balls. This formula also holds
for $k=1$, though for a different reason: here the probability is
zero since $0$ black balls are not allowed, and the formula indeed
gives $0$. A similar calculation shows that the probability of (2)
is 
\[
\frac{1}{t-1}\cdot\frac{t-k}{t}.
\]
Summing (1) and (2) gives
\[
\frac{1}{t-1}\cdot\left(\frac{k-1}{t}+\frac{t-k}{t}\right)=\frac{1}{t}
\]
and the induction is complete.
\end{proof}
Thus we see that the picture is very different from that of strongly
reinforced random walk. Looking at strongly reinforced random
walk at some very large time would give that with probability very
close to $1$, the walk visited one of the edges a very small number
of times, and the other edge all the rest (from symmetry each one gets
probability close to $\frac{1}{2}$). Here this is not the case, typically
each edge is visited a number of times proportional to $t$. On the
other hand, this is also very different from what we expect for, say,
once-reinforced random walk. It is not difficult to analyze this explicitly
and see that once-reinforced would have the number of visits to each
edge at time $t$ being $\frac{1}{2}t$, with the random errors being
of order $\sqrt{t}$. So we see that the linear case is quite special.
\parshape 0

The analogy to P\'olya's urn works in a number of other cases. Examine
the graph $\mathbb{Z}$. Examine a single vertex. Since there are
no cycles, you know that when you leave your vertex through some edge
$e$, you are bound to return through $e$, if you return at all.
The other vertices can only effect you through preventing return,
not in any other way. Thus you can couple LRRW on $\mathbb{Z}$ to
a sequence of i.i.d.\ P\'olya urns. The same holds for any tree,
except that you get multicoloured P\'olya urns. But these are also
well understood. Using these ideas, Pemantle \cite{P88} was able
to analyze LRRW on regular trees. He showed that there is a phase
transition in the initial weight $a$
\begin{thm}
\label{thm:pmntl}Let $G$ be a regular tree of degree $\ge3$, and
let $a+n$ be the reinforcement function. Then there exists an $a_{c}$
such that for $a<a_{c}$ LRRW is (strongly) recurrent, while for $a>a_{c}$
it is transient.
\end{thm}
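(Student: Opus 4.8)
The plan is to realize LRRW on $G$ as a mixture of reversible nearest-neighbour Markov chains --- i.e.\ as a random walk in a random conductance environment --- and then to decide recurrence versus transience of each environment by electrical-network arguments together with the theory of random walks on trees. Root $G$ at $x_0$, orient every edge away from $x_0$, and write $b\ge2$ for one less than the common degree of $G$. The point of departure is that every edge of $G$ is a cut edge: once the walk leaves a vertex $v$ along an edge $e$ it can return to $v$ only along $e$, and each return to $v$ occurs exactly at the end of an excursion into one subtree hanging off $v$. A bookkeeping consequence is that at the $m$-th visit to a non-root vertex $v$ the number of traversals of each incident edge is a \emph{deterministic} function of the first $m-1$ exit-choices made at $v$: a child edge carries $2k$ once it has been chosen $k$ times, whereas the parent edge carries $2k+1$, the extra unit being the single traversal that first brought the walk to $v$ --- necessarily a step down from the parent, as $G$ is rooted at $x_0$. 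Hence the sequence of exit-edges used at $v$ is precisely a generalized P\'olya urn --- draw a colour with probability proportional to its current weight, then add $2$ to that colour --- started from weight $a+1$ on the parent edge and $a$ on each of the $b$ child edges. As for the classical P\'olya urn (cf.\ Theorem~\ref{thm:polya}) this sequence is exchangeable, so by de Finetti's theorem it is, conditionally on a random vector
\[
\mathbf p_v\ \sim\ \mathrm{Dir}\!\left(\frac{a+1}{2},\ \frac a2,\ \dots,\ \frac a2\right)
\]
(the first coordinate being that of the parent edge), an i.i.d.\ sequence with law $\mathbf p_v$; and a Rubin-type exponential embedding run with an independent family of clocks at each vertex realizes all these urns and the walk itself simultaneously, forcing the vectors $(\mathbf p_v)_v$ to be independent. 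Since the trajectory is a deterministic function of the exit-choices at all vertices, conditionally on $(\mathbf p_v)_v$ the walk is Markov with transition law $\mathbf p_v$ from $v$; assigning conductance $1$ to the edges at $x_0$ and propagating by $c_{(v,w)}=c_{(u,v)}\,\mathbf p_v(w)/\mathbf p_v(u)$ (with $u$ the parent of $v$, $w$ a child) then exhibits the walk as the conductance walk with conductances $(c_e)$.

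By construction $c_e$ is a product of i.i.d.\ copies of the ratio $R:=\mathbf p_v(\mathrm{child})/\mathbf p_v(\mathrm{parent})$, one factor for each vertex on the geodesic from $x_0$ to $e$, so $(\log c_e)_e$ is a branching random walk on the $b$-ary tree with step law $\log R$. As the parent coordinate of $\mathbf p_v$ carries the larger Dirichlet parameter, this step has \emph{negative} mean
\[
\mathbb E[\log R]\ =\ \psi\!\left(\frac a2\right)-\psi\!\left(\frac{a+1}{2}\right)\ =:\ -\delta(a)\ <\ 0,
\]
$\psi$ the digamma function; so conductances tend to decay geometrically, but $b^{n}$ of them must be balanced against this at generation $n$. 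For a \emph{fixed} environment, recurrence of the conductance walk says precisely that the effective resistance from $x_0$ to infinity is infinite, and by the theory of random walks in random conductances on trees (Lyons; Lyons--Pemantle; and, for this very walk, Pemantle) this is controlled by whether the branching random walk $(\log c_e)$ has rays escaping to $+\infty$ --- which, by a first-passage-percolation-on-trees argument, happens if and only if
\[
\Phi(a)\ :=\ b\,\inf_{0<t\le1}\,\mathbb E_a\!\left[R^{\,t}\right]\ >\ 1.
\]
So for a.e.\ environment the conductance walk is transient when $\Phi(a)>1$ and recurrent when $\Phi(a)\le1$. Finally, the environment is i.i.d.\ over the non-root vertices, and the accompanying $0$--$1$ law --- quenched recurrence has probability $0$ or $1$ --- is exactly what lets one conclude, despite the lack of any general $0$--$1$ law for reinforced walks, that LRRW itself is recurrent iff the conductance walk is recurrent for a.e.\ environment.

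It remains to analyse $\Phi$. A short computation with Dirichlet moments gives
\[
\mathbb E_a\!\left[R^{\,t}\right]\ =\ \frac{\Gamma\!\left(\frac a2+t\right)\,\Gamma\!\left(\frac{a+1}{2}-t\right)}{\Gamma\!\left(\frac a2\right)\,\Gamma\!\left(\frac{a+1}{2}\right)}\qquad\bigl(0<t<(a+1)/2\bigr),
\]
and, $\psi$ being increasing, the infimum is attained where $\frac a2+t=\frac{a+1}{2}-t$, i.e.\ at $t=\frac14$, whence
\[
\Phi(a)\ =\ b\;\frac{\Gamma\!\left(\frac{2a+1}{4}\right)^{2}}{\Gamma\!\left(\frac a2\right)\,\Gamma\!\left(\frac{a+1}{2}\right)}.
\]
From this, $\Phi(a)\to b\ge2>1$ as $a\to\infty$ (the $\Gamma$-quotient tends to $1$, a standard asymptotic): the walk is transient, in accordance with simple random walk on a tree of degree $\ge3$ being transient. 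As $a\to0$ the factor $\Gamma(\frac a2)$ blows up while the other three $\Gamma$-values stay bounded, so $\Phi(a)\to0<1$: the walk is recurrent --- and in fact positively recurrent, since then $\delta(a)\to\infty$ drives the conductances down so fast that $\sum_e c_e<\infty$ almost surely. Moreover $\Phi$ is monotone increasing: this is a one-line consequence of the concavity of $\psi$, as $\frac{2a+1}{4}$ is the midpoint of $\frac a2$ and $\frac{a+1}{2}$. Hence $a_c:=\sup\{a:\Phi(a)\le1\}$ is finite and positive, with recurrence for $a<a_c$ and transience for $a>a_c$, as claimed.

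I expect the decisive difficulty to be the very first step: proving \emph{cleanly} that the per-vertex urns are independent, so that the mixing measure of the Markov-chain decomposition is genuinely the product of the Dirichlet laws above. That is exactly where the cut-edge structure of the tree must be used carefully; a hurried argument easily slides into the \emph{false} ``symmetric'' environment $\mathrm{Dir}(\frac a2,\dots,\frac a2)$ at every vertex, whose log-conductance increments would have zero mean and which, against $b\ge2$-fold branching, would wrongly predict transience for every $a$. Transcribing the Lyons--Pemantle tree criterion into the displayed form for $\Phi$ is a secondary technical point, introducing no new idea.
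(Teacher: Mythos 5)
The paper itself does not prove this theorem --- it is Pemantle's result, cited as \cite{P88}, and the surrounding text offers only the one-paragraph heuristic that on a tree the exit sequences at distinct vertices decouple into independent multicoloured P\'olya urns. Your argument is exactly that heuristic carried to completion: independent $\mathrm{Dir}(\frac{a+1}{2},\frac a2,\dots,\frac a2)$ environments (with the asymmetry on the parent edge, which you rightly flag as the point one must not fumble) obtained via Rubin's exponential embedding, followed by the Lyons--Pemantle transience criterion for i.i.d.\ conductance ratios on trees. The computations check out: the ratio of independent Gamma variables gives the stated moment formula, the infimum is at $t=\frac14$ by monotonicity of $\psi$, the limits $\Phi(a)\to0$ and $\Phi(a)\to b$ as $a\to0$ and $a\to\infty$ are right, monotonicity of $\Phi$ does follow from concavity of $\psi$ at the midpoint, and solving $\Phi(a_c)=1$ for $b=2$ reproduces Pemantle's known critical value ($a_c=1/4.29\dots$ in this normalization), which is a strong consistency check.

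Two small repairs. First, the theorem asserts \emph{strong} recurrence for every $a<a_c$, but you only argue positive recurrence in the limit $a\to0$. The same moment bound closes the gap: if $\Phi(a)<1$ pick $t\in(0,1]$ with $b\,\mathbb{E}[R^t]<1$; then $\sum_{|e|=n}c_e\le\bigl(\sum_{|e|=n}c_e^t\bigr)^{1/t}$ decays geometrically a.s.\ by Markov and Borel--Cantelli, so the total conductance is a.s.\ finite and the quenched chain is positive recurrent throughout the subcritical phase, not just near $0$. Second, the environment at the root is $\mathrm{Dir}(\frac a2,\dots,\frac a2)$ on $b+1$ coordinates, so setting all root conductances to $1$ does not reproduce the walk exactly; you should set $c_e=\mathbf p_{x_0}(e)$ there. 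This perturbs every conductance by one of $b+1$ a.s.\ positive finite factors and changes nothing in the recurrence/transience analysis, but as written the representation is not quite the walk you constructed.
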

Here strong recurrence means that the expected time to return to your
starting point is finite. This is significantly stronger than usual
recurrence: infinite graphs (like $\mathbb{Z}$ or $\mathbb{Z}^{2}$)
are recurrent but not strongly so (for $\mathbb{Z}$ this is the well-known
``gambler's ruin'' theorem). In a way it means that the process
behaves like random walk on a \emph{finite} graph. Again, compare
to the case of once-reinforced random walk on a tree which is transient
for any value of the parameter \cite{DKL02}.

Theorem \ref{thm:pmntl} shows that importance of the parameter $a$,
and from now on we will simply say ``LRRW with $a$'' rather than
the cumbersome ``LRRW with reinforcement function $a+n$''. Note that
the parameter $a$ has the opposite effect compared to once-reinforced
random walk: there small $a$ gave random-walk-like behaviour, while
here it is the large $a$ that give this.

Another case where the theory of urns is applicable is \emph{directed}
reinforced random walk. One modifies the definition of $N$ to count
directed traversals i.e.
\[
N^{\textrm{dir}}(x,y,t)=\#\{1\le s\le t:x_{s-1}=x,x_{s}=y\}
\]
and then again define $x_{s}$ by taking $a+N^{\textrm{dir}}$ and
normalizing it. Here it is not important what graph one looks at,
each vertex can be isolated and treated as a multicoloured urn independent
of all others. Nevertheless, this urn argument only reduces the model
to a kind of \emph{random walk in random environment}, a notorious
problem in its own right. This was tackled on the graphs $\mathbb{Z}^{d}$
by Enriquez and Sabot \cite{ES03,S11}. Interestingly, there is no
phase transition when $d\ge3$, the walk is transient for any choice
of $a$.

We will return to P\'olya's urn later, but for now let us go back
to general graphs.

\section{Partial exchangeability}

Another way to understand why LRRW is special is that it has a property
called \emph{partial exchangeability}. This means that for a given
path, its probability depends on how many times each edge is visited,
but not on the order in which these visits happen. Figure \ref{fig:Partial-exchangeability}
\begin{figure}
\centering\input{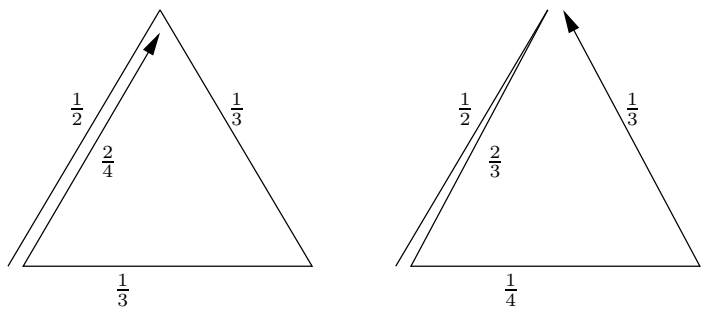}

\caption{\label{fig:Partial-exchangeability}Partial exchangeability}
\end{figure}
 shows two such paths in a triangle. Both paths have length 4, and
both cross one edge twice and the other two edges once, but are different.
The numbers in the figure are the probability of each traversal for
LRRW with $a=1$. The reader can verify that the numbers are not the
same, but their product is
\[
\frac{1}{2}\cdot\frac{1}{3}\cdot\frac{1}{3}\cdot\frac{2}{4}=\frac{1}{2}\cdot\frac{2}{3}\cdot\frac{1}{4}\cdot\frac{1}{3}
\]
Indeed the numerators on the left are a permutation of the numerators
on the right; and ditto for the denominators. The general proof is
the same:
\begin{thm}
LRRW is partially exchangeable.\end{thm}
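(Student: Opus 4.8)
The plan is to compute the probability of an arbitrary finite path $\gamma = (x_0, x_1, \dots, x_n)$ explicitly as a product over its individual steps, and then observe that this product can be reorganized into a quantity that depends only on the multiset of edge-traversal counts together with the vertex-visit counts — data which are invariant under reordering the path (subject to it remaining a legal path with the same endpoints). The key point is that at step $s$, the transition from $x_{s-1}$ to $x_s$ contributes a numerator $a + N((x_{s-1},x_s), s-1)$ and a denominator $\sum_{y \sim x_{s-1}} (a + N((y,x_{s-1}), s-1))$, and one must understand how these numerators and denominators aggregate over the whole path.

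First I would handle the numerators. For a fixed edge $e = (u,v)$, the path traverses $e$ some number of times, say $m = N(e,n)$; the first traversal of $e$ sees count $0$, the next sees count $1$, and so on. Hence, collecting the numerator contributions of all traversals of $e$, one gets exactly $\prod_{j=0}^{m-1}(a+j) = a(a+1)\cdots(a+m-1)$, a quantity depending only on $e$ and on $m = N(e,n)$ — manifestly order-independent. Taking the product over all edges gives the full numerator, $\prod_{e} \prod_{j=0}^{N(e,n)-1}(a+j)$.

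Next I would handle the denominators. Every time the path departs from a vertex $x$, it contributes a denominator equal to the total weight at $x$ at that moment, namely $\deg(x)\cdot a + 2 D$, where $D$ is the number of traversals of edges incident to $x$ so far (the factor $2$ because an edge incident to $x$ contributes its count to the weight sum at $x$ regardless of direction — this is where one must be a little careful, and I would track the quantity $D_x(s) = \sum_{y\sim x} N((y,x),s)$, which increases by $1$ each time an edge at $x$ is traversed, in either direction). If the path departs from $x$ a total of $r$ times, then over those $r$ departures the value of $D_x$ runs through a set of $r$ values, and the denominator contribution from $x$ is $\prod (\deg(x) a + 2 D_x)$ over those departures. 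The mild subtlety is that the number of departures from $x$ equals the number of arrivals at $x$ up to a boundary correction at $x_0$ and $x_n$; but in all cases the set of values of $D_x$ encountered at departures from $x$ is determined by $\deg(x)$, by the total traversal count of edges at $x$, and by whether $x \in \{x_0, x_n\}$, \emph{not} by the order. So the full denominator is a product over vertices of terms depending only on these order-invariant statistics.

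The main obstacle, such as it is, is the bookkeeping in the denominator: making precise that the multiset of ``local time at $x$ at the moments of departure from $x$'' is order-independent once one fixes the endpoints and the edge-traversal counts. The clean way to see it: whenever the walker is at $x$ having arrived along some edge, the value $D_x$ has just been incremented by that arrival, and a departure immediately follows (except possibly at $x_n$); an excursion away from $x$ and back increments $D_x$ by exactly $2$. Consequently the values of $D_x$ seen at successive departures from $x$ are $\varepsilon, \varepsilon+2, \varepsilon+4, \dots$ where $\varepsilon \in\{0,1\}$ records whether the walk starts at $x$, and the number of terms is $\lceil N_x/2\rceil$-ish where $N_x$ is the total traversal count at $x$ — in any event a function of $(\deg(x), N_x, \mathbf{1}_{x=x_0}, \mathbf{1}_{x=x_n})$ alone. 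Once this is pinned down, two legal paths with the same endpoints and the same edge-traversal counts have literally equal products of numerators and equal products of denominators, hence equal probability, which is exactly partial exchangeability. I expect the whole argument to be short; the only thing to write carefully is the parity/boundary accounting for the denominators.
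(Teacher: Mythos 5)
Your proposal is correct and follows essentially the same route as the paper: numerators aggregate per edge into $a(a+1)\cdots(a+N(e)-1)$, and the denominators at successive departures from a vertex form the arithmetic progression $a\deg(x)+\varepsilon,\ a\deg(x)+\varepsilon+2,\dots$ with the same boundary accounting at $x_0$ and $x_n$ (the paper adds the small observation that $x_n$ itself is recoverable from the traversal counts via parity). One slip to fix in writing it up: the total weight at $x$ is $a\deg(x)+D_x(s)$, not $a\deg(x)+2D$ --- the factor $2$ belongs to the increment of $D_x$ between consecutive departures, as your final paragraph correctly states.
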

\begin{proof}
We may assume without loss of generality that the reinforcement function
is $n+a$ for some $a>0$. Let $\gamma$ be a path in the graph $G$.
Then the probability $p(\gamma)$ that $\gamma=(\gamma_{0},\dotsc,\gamma_{l})$
is the beginning of the LRRW path is
\[
p(\gamma)=\prod_{i=0}^{l-1}\frac{a+N_{\gamma}((\gamma_{i},\gamma_{i+1}),i)}{\sum\limits _{\smash{y\sim\gamma_{i}}}a+N_{\gamma}((\gamma_{i},y),i)}
\]
where $N_{\gamma}$ is of course the analog of $N$ above for the
path $\gamma$ i.e.\ $N_{\gamma}(e,t)=\#\{1\le s\le t:(\gamma_{s-1},\gamma_{s})=e\}$.
Now, if $N_{\gamma}(e,l)=q$ for some $q$, then the numerators contain
the terms $a$, $a+1,\dotsc,a+q$ at the $q$ places where $e$ is
an edge of $\gamma$. This is not specific to linear reinforcement.
Now assume that the vertex $v\ne x_{0}$ appears $r$ times in the
path $\gamma_{0},\dotsc,\gamma_{l-1}$ (note that we do not count
$\gamma_{l}$).  Then the denominators contain $a\deg v+1$, $a\deg v+3,\dotsc,a\deg v+2r-1$
at the $r$ places where $\gamma_{i}=v$. For $v=x_{0}$ you get a
similar picture, except the denominators contain $a\deg x_{0},\dotsc,a\deg x_{0}+2r-2$.
In other words it does not matter at which order did $\gamma$ traverse
the edges coming out of $v$, since each such traversal increases
the total weight of the edges surrounding $v$ by $1$ (and another
$1$ is added when returning to the vertex). This is the point which
is special to linear reinforcement. 

Hence, if you know $N_{\gamma}(e,l)$ for all $e$ you can reconstruct
both numerators and denominators up to a permutation (this requires
the observation that you can identify the last vertex of $\gamma$,
since it is the only one with odd total number of appearances, or
$x_{0}$ if no vertex has this property). Thus you can calculate $p(\gamma)$
from $N_{\gamma}$, and the process is partially exchangeable.
\end{proof}
Partial exchangeability is interesting because of the following theorem
of Diaconis \& Freedman \cite{DF80}
\begin{thm}
Let $x=\{x_{0},x_{1},\dotsc\}$ be a random sequence of elements in
some state space, which is recurrent. Then $x$ is partially exchangeable
if and only if it is a mixture of Markov chains.
\end{thm}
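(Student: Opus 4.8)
One implication requires no work and no recurrence: if $x$ is a mixture of Markov chains, then conditionally on the random initial law and transition kernel $\kappa$ the probability of a finite path $(a_0,\dots,a_n)$ is the initial weight of $a_0$ times $\prod_i\kappa(a_i,a_{i+1})$, which depends only on $a_0$ and on the number of $a\to b$ transitions made for each ordered pair $(a,b)$; integrating over the mixing measure preserves this, so $x$ is partially exchangeable. The whole content is the converse, and the plan is the de Finetti strategy: guess that the ``random environment'' is the array of asymptotic empirical transition frequencies, and prove that conditioned on it the walk is Markov.

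So assume $x$ recurrent and partially exchangeable. First I would condition on the value of $x_0$ and work conditionally (mixing over the law of $x_0$ at the very end); a short argument using partial exchangeability shows recurrence propagates, so every visited state is visited infinitely often. For a visited state $a$ write $\beta^a_1,\beta^a_2,\dots$ for its successive exits, $\beta^a_j$ being the state occupied right after the $j$-th visit to $a$. The map from $x$ to $(x_0,(\beta^a_j)_{a,j})$ is a measurable bijection onto consistent exit arrays, inverted by greedy reconstruction (``at the $j$-th visit to $a$, go to $\beta^a_j$''). The first step is to show that, for each fixed $a$, the successive segments of the path between consecutive visits to $a$ form an exchangeable sequence: permuting these segments yields a path with the same initial state and the same transition counts, so partial exchangeability makes it equiprobable, and recurrence ensures infinitely many segments so that de Finetti is available.

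The second and main step is the joint statement: the full array $(\beta^a_j)_{a,j}$ is \emph{separately exchangeable}, i.e.\ its law is unchanged when the index $j$ is permuted (finitely) inside each row independently of the others. Granting this, a de Finetti--type representation theorem for row-exchangeable arrays produces a random family of probability measures $\kappa(a,\cdot)$, realised as the almost sure limits $\kappa(a,b)=\lim_n \frac1n\#\{j\le n:\beta^a_j=b\}$ of the empirical exit frequencies, such that conditionally on $\kappa$ all the $\beta^a_j$ are independent with $\beta^a_j\sim\kappa(a,\cdot)$. The reconstruction then closes the argument: given $\kappa$, at time $n$ the pair (current state $x_n$, its visit index $j$) is a function of the exits used strictly before $n$, the exit $\beta^{x_n}_j$ deciding $x_{n+1}$ is a different entry, and the entries are conditionally independent given $\kappa$; hence $\mathbb{P}(x_{n+1}=b\mid x_0,\dots,x_n,\kappa)=\kappa(x_n,b)$, so conditionally on $\kappa$ the walk is the Markov chain with kernel $\kappa$ started from $x_0$. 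Unconditioning on $\kappa$ and on $x_0$ exhibits $x$ as a mixture of Markov chains.

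The hard part is the second step, because a swap of two between-visits segments at $a$ also reorders, in blocks, the visits to every other state those segments contain, so the rows of the array get entangled and one cannot simply reduce to a single adjacent transposition in a single row. I would handle this with finite windows: stop the path when a fixed finite set of states has been visited prescribed numbers of times, use partial exchangeability to see that, given its length, initial state and transition-count vector, the stopped path is uniform over the admissible walks, deduce from the matrix-tree / BEST-theorem description of that uniform measure that the induced finite exit array is invariant under row permutations, and then let the window grow (the empirical frequencies then converge almost surely, giving $\kappa$). What remains is bookkeeping for the random set of visited states, and --- for a general, possibly uncountable, state space as in the statement --- the use of standard Borel spaces and regular conditional probabilities to legitimise the de Finetti step and the limits.
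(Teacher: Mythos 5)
Your overall architecture (successor/exit arrays at every state, row-exchangeability of the array, a de Finetti theorem for arrays, greedy reconstruction) is a genuinely different route from the one the paper sketches, which cuts the path at its successive returns to $x_{0}$ and observes that those excursion blocks form a \emph{fully} exchangeable sequence in the space of finite excursions, so that the classical de Finetti theorem applies directly. But your route, as written, has a genuine gap at exactly the step you flag as the hard one, and the tool you invoke there in fact proves the opposite of what you need. The uniform distribution on walks with prescribed initial state, terminal state and transition-count matrix does \emph{not} induce a row-permutation-invariant law on the finite exit array: the BEST/Whittle counting shows that an array with the prescribed row contents reconstructs to an admissible walk if and only if the \emph{last} exits from the states form a spanning arborescence converging to the terminal state, and this constraint is destroyed by row permutations. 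A minimal counterexample: on states $\{0,1,2\}$ with transition counts $M(0,1)=M(1,2)=M(2,1)=M(1,0)=1$, the only admissible walk from $0$ is $0,1,2,1,0$, whose exit row at $1$ is $(2,0)$; the permuted row $(0,2)$ reconstructs to $0,1,0$, stranding the remaining transitions, so it receives probability $0$ rather than $\frac{1}{2}$. Hence your claim that ``the induced finite exit array is invariant under row permutations'' is false, and the finite-window reduction does not close.

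This need not be fatal to the strategy --- in the infinite recurrent setting the arborescence constraint pins down only one exit per row per window, a vanishing fraction, and one can hope to show it washes out as the window grows --- but that limiting argument is precisely the technical heart of the theorem and is where recurrence earns its keep; it cannot be waved through. It is also where your preliminary claim that every visited state is visited infinitely often (needed for your rows to be infinite) must be established, and that is a lemma, not a one-liner. The paper's decomposition sidesteps all of this: permuting whole excursions at $x_{0}$ manifestly preserves the transition counts of the relevant initial segments, so full exchangeability of the blocks is an immediate consequence of partial exchangeability with no correction term; de Finetti then gives conditionally i.i.d.\ excursions, and the remaining (still nontrivial, but cleaner) work is to identify the conditional law as a Markov chain via the almost sure limits of the empirical transition frequencies. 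Your easy direction (mixture of Markov chains implies partial exchangeability) is fine.
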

Here recurrent means that the sequence returns to its starting vertex
with probability $1$. This has to be specified as for general random
sequences the various definitions of recurrence are not identical.
In fact, the counterexamples in \cite{DF80} which show that recurrence
is necessary consist of two states and a process that gets stuck in
the second state. A mixture of Markov chains means that there is some
measure $\mu$ on the space of Markov chains such that the probability
of every event $E$ is given by
\[
\int\mathbb{P}^{W}(E)\, d\mu(W)
\]
where $W$ is a Markov chain and $\mathbb{P}^{W}(E)$ is the probability
of $E$ under the Markov chain $W$. The same concept is known under
the name of random walk in random environment. Again, you first pick
an environment $W$ randomly, then perform random walk on it.
We already mentioned random walk in random environment when we discussed
the directed case above. But there each vertex was independent, while
in the general case this is typically not the case.

We will not prove the theorem, but just make some remarks on its proof.
Recurrence is used to break the infinite process into blocks, from
one visit of $x_{0}$ to the next. These blocks then turn out to be
\emph{fully} exchangeable, i.e.\ any permutation has the same probability.
This allows to apply de Finetti's theorem which states that such a
process is a mixture of i.i.d.\ processes. De Finetti's theorem itself
is not difficult either, it is an application of Krein-Milman. The
point to note here is that the proof is soft and abstract, without
any explicit calculations.

Let us demonstrate the utility of this fact by applying it to P\'olya's
urn, i.e.\ to LRRW on a line of length $3$ with $a=2$. There is
no question as to recurrence, since our walker returns to its starting
point every 2 steps without failure. Our random environment can only
relate to the central vertex, so asking ``what is the random environment?''
is equivalent to asking ``what is the probability to go left from
the central vertex?''. In other words, there is only one parameter.
Let us denote it by $p$.
\begin{thm}
\label{thm:polyaunif}The parameter $p$ defined above is uniform
in $[0,1]$.\end{thm}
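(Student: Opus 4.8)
The plan is to marry the Diaconis--Freedman representation to P\'olya's theorem (Theorem~\ref{thm:polya}): the former guarantees that our walk genuinely \emph{is} a mixture of the one-parameter family of nearest-neighbour chains on the three-vertex line, and the latter will pin down the mixing measure, which is all that $p$ records. So let $\mu$ denote the law of $p$; the goal is to show $\mu$ is Lebesgue measure on $[0,1]$.

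First I would isolate the right statistic. Our walk sits at the central vertex at every even time, and the successive excursions away from it (one step to $L$ or to $R$, followed by the forced return) are, conditionally on the environment $W$, i.i.d.\ Bernoulli$(p)$ choices of side. Let $S_n$ be the number of the first $n$ excursions that go left. Then, conditionally on $p$, we have $S_n\sim\mathrm{Binomial}(n,p)$, and in particular $S_n/n\to p$ almost surely by the strong law of large numbers. Since the unconditioned walk is $\int \mathbb P^{W}(\cdot)\,d\mu(W)$ and this convergence holds $\mathbb P^{W}$-a.s.\ for $\mu$-a.e.\ $W$ (with limit $p=p(W)$), the same convergence $S_n/n\to p$ holds almost surely for the reinforced walk itself.

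Next I would compute the \emph{unconditional} law of $S_n$. Via the stated correspondence --- the LRRW weights at time $2n$, halved, form a P\'olya urn started from one ball of each colour --- the event $\{S_n=k\}$ is exactly the event that this urn has $k+1$ balls of the ``left'' colour among its $n+2$ balls; Theorem~\ref{thm:polya} says this happens with probability $\frac{1}{n+1}$ regardless of $k$, so $S_n$ is uniform on $\{0,1,\dots,n\}$. (If one would rather not go through the urn: the probability of going left on each of the first $n$ excursions telescopes, $\prod_{s=1}^{n}\frac{2s}{2s+2}=\frac{1}{n+1}$, which is already enough for what follows.)

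Finally I would combine the two halves. Since $S_n$ is uniform on $\{0,\dots,n\}$, the variable $S_n/n$ converges in distribution to the uniform law on $[0,1]$ as $n\to\infty$; but we also showed $S_n/n\to p$ almost surely, hence in distribution, and a distributional limit is unique, so $\mu$ is uniform on $[0,1]$. Alternatively, the telescoping computation gives $\int_0^1 p^n\,d\mu(p)=\mathbb P(S_n=n)=\frac{1}{n+1}$ for every $n\ge 0$, which are precisely the moments of Lebesgue measure, so uniqueness in the Hausdorff moment problem finishes the argument. I do not anticipate a genuine obstacle: essentially all the content is P\'olya's theorem, and what remains is the soft step of recognising a probability measure on $[0,1]$ from its moments (or from a weak limit) --- the only place demanding a little care is checking that the almost sure convergence $S_n/n\to p$, trivial conditionally on $W$, is correctly transported through the mixing measure $\mu$.
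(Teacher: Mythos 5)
Your proposal is correct and is essentially the paper's own argument: both identify the left-excursion frequency, note that conditionally on the environment it converges to $p$ by the law of large numbers, use P\'olya's theorem (Theorem \ref{thm:polya}) to see that the unconditional law of this statistic is uniform, and then pass to the limit through the mixing measure to conclude $\mu$ is Lebesgue. Your moment-method alternative (matching $\int_0^1 p^n\,d\mu(p)=\frac{1}{n+1}$ with the Hausdorff moment problem) is a clean minor variant not in the paper, but the core route is the same.
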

\begin{proof}
Fix $p$ and examine the Markov chain on a line of length $3$ which
moves from the central vertex to the left vertex with probability
$p$ and to the right vertex with probability $1-p$. Denote by $L_{t}$
the number of times the left edge was crossed by time $t$. By the
law of large numbers we know that 
\[
\lim_{t\to\infty}\frac{L_{t}}{t}=p\qquad\mbox{almost surely.}
\]
The random walk in random environment tells us that for the measure
$\mu$ that we seek,
\[
\mathbb{P}\left(\frac{L_{t}}{t}\in E\right)=\int_{0}^{1}\mathbb{P}^{p}\left(\frac{L_{t}}{t}\in E\right)d\mu(p).
\]
This holds for every measurable $E$, but let us restrict our attention
to intervals. By the explicit solution of P\'olya's urn (theorem
\ref{thm:polya} above), the left hand side is $|E|+O(1/t)$ where
$|E|$ is the Lebesgue measure of $E$. Taking $t\to\infty$ (which
can be done inside the integral as $\mu$ is a probability measure
and the integrand is bounded) gives
\[
|E|=\int_{0}^{1}\mathbf{1}\{p\in E\}\, d\mu(p).
\]
Since this holds for all intervals $E$, we get that $\mu$ is uniform,
as needed.
\end{proof}
In other words, the theorem of Diaconis and Freedman allows us to
get from the ``static'' description of the P\'olya urn we had before,
a ``dynamic'' description which is fuller. We now know that P\'olya's
urn is equivalent, \emph{as a process}, to choosing some number $p\in[0,1]$
uniformly and then picking each time a black ball with probability
$p$ and a white ball with probability $1-p$.

In general graphs it proved to be a significant challenge to find
an explicit formula for the environment, and this is what we turn
to next.

\section{The magic formula}

Let us consider for a moment finite graphs. Even a triangle would
prove to be highly nontrivial! LRRW is recurrent on any finite graph
--- this is not difficult to show, but we will skip it here. The theorem
of Diaconis \& Freedman promises us that \emph{some }random environment
exists such that LRRW is equivalent to random walk in the environment.
But what is it? It turns out there is a formula for it, known fondly
as the ``magic formula''. The history of its discovery is complicated
and the reader would be best served by reading about it in detail
in \cite{MOR08}. It turns out that the environment is always \emph{reversible}
(a fact which does not follow from Diaconis \& Freedman). This means
that there is a (random) weight function $W:E\to[0,\infty)$ where
$E$ is the set of edges of $G$ such that the probability to go from
$x$ to $y$ is 
\[
\frac{W(x,y)}{\sum\limits _{z\sim x}W(x,z)}.
\]
It is common to denote the denominator by $W(x)$ and we shall do
so below. Reversibility is perhaps the most important property a Markov
chain may or may not possess. Most notably a reversible Markov
chain gives rise to a \emph{self-adjoint} operator (on $l^{2}$ with
the weight being $W$) which is amenably to spectral analysis. 

Now, $W$ is not unique since multiplying $W$ by a constant gives
rise to exactly the same Markov chain. So we need to normalize and
we normalize so that $\sum_{x\sim y}W(x,y)=1$. For completeness,
let us state the magic formula for non-constant initial weights, i.e.\ assume
for every edge $e$ there is some $a_{e}$ and the transition probabilities
at time $t$ are $a_{e}+N(e,t)$, normalized. The magic formula is
then:
\begin{thm}
Let $G=(V,E)$ be a finite graph. Examine LRRW on $G$, starting from
$x_{0}$ with initial weights $a_{e}$, and let $\mu$ be the corresponding
random environment. Then the density of $\mu$ at a given $W$ is
given by
\[
\frac{1}{Z}\cdot\frac{{\displaystyle \prod_{e\in E}W(e)^{a_{e}-1}}}{{\displaystyle W(x_{0})^{\frac{1}{2}a_{x_{0}}}\prod_{v\in V\setminus\{x_{0}\}}W(v)^{\frac{1}{2}(a_{v}+1)}}}\sqrt{\sum_{T}\prod_{e\in T}W(e)}
\]
where the sum inside the square root is a sum over all spanning trees
of $G$; where $Z$ is a normalization constant; and where $a_{v}=\sum_{e\ni v}a_{e}$.
\end{thm}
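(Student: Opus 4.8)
The plan is to prove the magic formula by combining partial exchangeability with the explicit reconstruction of path probabilities carried out above, and then performing a delicate combinatorial identity involving spanning trees. First I would fix a finite graph $G$ and a long LRRW path $\gamma$ of length $l$ returning to $x_0$. By partial exchangeability, $p(\gamma)$ depends only on the edge-crossing counts $n_e = N_\gamma(e,l)$, and from the reconstruction in the proof above it equals a ratio whose numerator is $\prod_e \prod_{j=0}^{n_e-1}(a_e+j)$ and whose denominator is a product over vertices $v$ of terms of the form $\prod_j (a\deg v + \text{odd or even integers})$, governed by the number of visits $r_v$ to $v$. Using Stirling (or rather the beta-function asymptotics for the Pochhammer symbols) as $l\to\infty$ with the crossing \emph{fractions} $n_e/l$ converging, I would extract the ``rate function'' governing which edge-frequency vectors are typical, and match it against the candidate density.

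Second, and this is the substantive step, I would relate the \emph{law} of the environment to this frequency data. The point is that if $\mu$ is the mixing measure, then the probability that LRRW traces out a path with crossing counts $\approx (w_e \cdot l)$ equals $\int \mathbb{P}^W(\text{same event})\, d\mu(W)$, and for a fixed reversible environment $W$ the crossing frequencies of a recurrent reversible chain concentrate (by the law of large numbers / ergodic theorem) at the stationary edge frequencies, which are proportional to $W(e)$ after normalization. So, exactly as in the proof of Theorem~\ref{thm:polyaunif}, one reads off the density of $\mu$ as the large-$l$ exponential rate of $p(\gamma)$ — but now one must be careful because here the ``local limit'' correction (the polynomial prefactor, not just the exponential rate) is what produces the square-root-of-spanning-trees factor. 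Concretely, the number of paths with a prescribed crossing-count vector, together with the Gaussian fluctuation corrections, contributes a determinantal factor; by the matrix-tree theorem this determinant is $\sum_T \prod_{e\in T} W(e)$, and its square root appears because we are comparing a walk measure (which counts each return excursion) against the i.i.d.-over-excursions structure supplied by de Finetti, effectively taking a square root of a transfer-type operator.

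Third, I would pin down the vertex exponents. The numerator gives $\prod_e W(e)^{a_e-1}$ directly from the Pochhammer asymptotics $\prod_{j=0}^{n_e-1}(a_e+j) \sim \Gamma(a_e+n_e)/\Gamma(a_e) \sim n_e^{a_e-1}\cdot(n_e-1)!$ and $n_e \asymp W(e)\cdot l$. The denominators at a vertex $v$, being products of arithmetic progressions with common difference $2$ and $r_v$ terms, behave like $(2r_v)!/(2^{r_v} r_v!)$ times a power $W(v)^{\frac12 a_v}$ or $W(v)^{\frac12(a_v+1)}$ depending on the parity offset — the $+1$ for $v\ne x_0$ coming from the extra weight increment ``added when returning to the vertex'' noted in the partial-exchangeability proof, and the absence of that $+1$ at $x_0$. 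Since each crossing of an edge at $v$ corresponds to (half) a visit, $r_v$ is comparable to $\frac12 W(v) l$, and collecting these powers yields exactly the denominator $W(x_0)^{\frac12 a_{x_0}}\prod_{v\ne x_0} W(v)^{\frac12(a_v+1)}$. All leftover factorials and powers of $2$ and $l$ are path-independent and get absorbed into the normalization constant $Z$.

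The hard part will be step two: rigorously justifying that the \emph{polynomial} (not merely exponential) correction to the path count equals, in the limit, the square root of the weighted spanning-tree polynomial. This is where the matrix-tree theorem must be invoked, and one has to set up the right central-limit / local-limit statement for the crossing-number vector of a recurrent reversible chain — essentially identifying the covariance of the crossing fluctuations with the graph Laplacian restricted away from $x_0$, whose determinant is $\sum_T \prod_e W(e)$. Keeping track of which Laplacian minor appears, and hence confirming that it is the \emph{spanning-tree} polynomial of the full graph (weighted by $W$) under the normalization $\sum_{e} W(e) = 1$, is the delicate bookkeeping; everything else is Stirling's formula and the abstract de Finetti input already available from Diaconis--Freedman.
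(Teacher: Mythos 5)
The first thing to say is that the paper does not prove this theorem: it states the magic formula, defers its derivation and history entirely to \cite{MOR08}, and only \emph{applies} it (in the second proof of Theorem \ref{thm:polyaunif}). So there is no in-paper argument to compare you against; your proposal has to be judged against the known Coppersmith--Diaconis / Keane--Rolles derivation, and in its overall architecture it does reproduce that derivation. Your steps one and three are sound: partial exchangeability reduces $p(\gamma)$ to a ratio of Pochhammer products in the crossing numbers $n_e$ and the visit counts $r_v$, and the asymptotics $\Gamma(a_e+n_e)/\Gamma(a_e)\sim n_e!\,n_e^{a_e-1}/\Gamma(a_e)$ together with $\prod_j(a_v+2j+\mathrm{offset})\sim 2^{r_v}r_v!\,r_v^{(a_v+\mathrm{offset})/2-1}$ do produce exactly the exponents $a_e-1$, $\tfrac12 a_{x_0}$ and $\tfrac12(a_v+1)$, with the parity offset at $x_0$ exactly as you describe.

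The genuine gap is in step two, precisely where you locate the difficulty: the origin of $\sqrt{\sum_T\prod_{e\in T}W(e)}$. Two problems. First, the mechanism you propose --- the Gaussian covariance of the crossing-number fluctuations of the recurrent chain --- cannot be the source: writing $\mathbb{P}(k=lx)=\int\mathbb{P}^W(k=lx)\,d\mu(W)$ and using the local CLT for fixed $W$, the factor $\det(2\pi\Sigma_W)^{-1/2}$ is exactly the normalization that makes the Gaussian integrate to $1$ over $W$ (under your normalization $\sum_e W(e)=1$ the map from $W$ to its stationary edge frequencies is the identity), so no determinant survives into the density from that side. Second, the determinant actually enters through the one ingredient you never supply: an exact count of the paths of length $l$ with prescribed crossing numbers. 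That count rests on the BEST theorem (Eulerian circuits of a directed multigraph are counted by arborescences times $\prod_v(\deg^+v-1)!$), applied to the multigraph with $k_e$ parallel copies of each edge, followed by a sum over the directed splittings $k_e=k_e^++k_e^-$ constrained by in-degree equal to out-degree at every vertex; it is the asymptotics of this constrained lattice sum of near-central binomials --- a Gaussian sum over a sublattice of codimension $|V|-1$ whose quadratic form is the weighted Laplacian --- that delivers the tree polynomial to the power $\tfrac12$ via matrix--tree. Until you know the path count \emph{is} an arborescence count, ``the count contributes a determinantal factor, which by matrix--tree is the tree polynomial'' is circular. A secondary gap: you take for granted that the Diaconis--Freedman environment is reversible with edge weights given by the limiting crossing frequencies; as the paper itself remarks, reversibility does not follow from \cite{DF80}, and in this derivation it must be extracted from the same directed crossing-number asymptotics (showing $k_{(u,v)}/k_{(v,u)}\to1$).
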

The most problematic term is the sum over trees, as it introduces
long-range dependencies. By the matrix-tree theorem this sum can be
written as a determinant, but that does not simplify the formula significantly. 

As an exercise, let us analyse P\'olya's urn again:
\begin{proof}
[Second proof of theorem \ref{thm:polyaunif}] We use the magic formula.
All $a_{e}=2$, $a_{x_{0}}=4$ and for $v\ne x_{0}$ (i.e.\ one of
the side vertices), $a_{v}=2$. Since the normalization is $\sum W(e)=1$,
we have $W(x_{0})=1$. Denote the weight of the left edge by $p$,
so for one of the side vertices $W(v)=p$ while for the other it is
$1-p$. All in all this gives
\[
\frac{1}{Z}\cdot\frac{p(1-p)}{1^{2}\cdot p^{3/2}(1-p)^{3/2}}\sqrt{p(1-p)}
\]
The expression inside the square root is so simple because a line
graph has exactly one spanning tree (the graph itself), so the sum
is only over one term. Everything cancels and we get that the density
is $\frac{1}{Z}$, which of course means that $Z=1$, proving the
theorem.
\end{proof}
Throughout the last decade the magic formula stood in the center of
research on LRRW. Even the case of the ladder was not so easy, and
was done in \cite{MR05}. The next result achieved was a tightness
result that allowed to show that LRRW is always equivalent to a random
walk in random environment, even in the transient case (which is not
covered by Diaconis \& Freedman) \cite{MR07}. The most impressive
result, also due to Merkl \& Rolles \cite{MR09}, was the proof that
a certain two-dimensional graph is recurrent. A conjecture that goes
back to the 80s \cite[page 1241]{P88} is that LRRW on $\mathbb{Z}^{2}$
is recurrent, at least for $a$ sufficiently small. Merkl \& Rolles
got quite close, but their proofs required to replace each edge by
a line of length 130 or more.

Around the same period a surprising connection to certain quantum
models appeared, a topic we will discuss next.

\section{The hyperbolic sigma model}

The hyperbolic sigma model has its roots in physics. As some of the
readers (and, frankly, the author) may not be familiar with the necessary
background, we will be short and vague. Hopefully the description
will still be useful to some. Our starting point is supersymmetry.

Supersymmetry started its life as a branch of particle physics that
postulated that each of the known particles has a partner (a ``superpartner'')
which is similar in most aspects, but the superpartner of a fermion
is a boson and vice versa. The existence of these particles leads
to cancellations which would solve several theoretical problems with
the current model. These superpartners were never found in particle
accelerators, but the mathematics of these cancellations turned out
to be very fruitful.

One potential application of supersymmetry to problems which have
nothing to do with bosons or fermions is the Wegner-Efetov approach
to random matrices and to certain disordered quantum systems \cite{E97}.
The titular hyperbolic sigma model captures some of the mathematical
difficulties of this approach, and, conjecturally, most of the phenomenology.
It has been studied by Disertori, Spencer \& Zirnbauer \cite{SZ04,DS10,DSZ10}.
The model, roughly, is as follows. We are looking at functions $\sigma$
from the vertices of our graph (say $\mathbb{Z}^{d}$) into hyperbolic
space. The action (or energy) $A(\sigma)$ is a sum over the edges
of some interaction \emph{which respects the symmetries of the hyperbolic
space}. Then each $\sigma$ is given probability density proportional
to $e^{-\beta A}$ where $\beta$ is a parameter which is analogous
to the initial weights $a$ in LRRW.

The approach taken in \cite{SZ04} to tackle the hyperbolic sigma
model is to make a change of variable in the hyperbolic plane (change
to the so-called horospherical coordinates) and then integrate some of the
variables. The result was a sigma model with values in $\mathbb{R}$
but with a much more complicated interaction term, which included
a determinant very similar to the one appearing in the magic formula.
This caused significant curiosity but for a few years it was not
clear whether the similarity is coincidental or not. This mystery
was solved in November 2011.

\section{The solution}

It was Sabot and Tarr\`es who managed to make the connection between
LRRW and the hyperbolic sigma model rigorous \cite{ST}. They investigated
a variation on LRRW called the vertex-reinforced jump process, defined
in continuous time. On the one hand the vertex-reinforced jump process
has a random walk in random environment representation with the density
of each environment \emph{identical} to the formulas that appear in
the hyperbolic sigma model. This allowed them to harness the results
of \cite{DS10} to show that the vertex-reinforced jump process is
recurrent for small $a$ in any graph (the jump process has a parameter
which acts analogously to $a$ for LRRW). On the other hand, they showed
that LRRW is given by a vertex-reinforced jump process with random
weights. Thus, after some modification of the techniques of \cite{DS10}
to work with different, unbounded $\beta$ in different edges, they
were able to show:
\begin{thm}
For every graph $G$ there exists some $a_{0}$ such that LRRW is
recurrent for all $a<a_{0}$.
\end{thm}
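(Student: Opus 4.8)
The plan is to reduce LRRW to the vertex-reinforced jump process (VRJP) in two stages, exactly as indicated in the preceding section. First I would recall the VRJP on a graph $G$: this is a continuous-time process which, at a vertex $v$ occupied for local time $\ell_v$, jumps to a neighbour $w$ at rate proportional to $\beta_{vw}(1+\ell_w)$, where $\beta$ is an edge parameter playing the role of $a$. The key input I would cite is the Sabot--Tarr\`es representation: the VRJP is a mixture of reversible Markov chains whose environment has a density \emph{identical} to the one arising in the horospherical representation of the hyperbolic sigma model studied by Disertori--Spencer--Zirnbauer. From \cite{DS10} one then gets, for every finite box and for $\beta$ small enough (uniformly in the box), exponential decay of the relevant correlation functions, hence — passing to the infinite-volume limit — that for $\beta < \beta_0(G)$ the VRJP environment is almost surely such that the walk is recurrent. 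The main technical adjustment here, which I would flag as a genuine obstacle rather than bookkeeping, is that \cite{DS10} is written for a single constant $\beta$, whereas the reduction from LRRW will force us to work with $\beta$-values that differ from edge to edge and are \emph{unbounded}; one must go back into the proof of \cite{DS10} and check that the positivity/convexity structure of the action survives, that the Ward identities still close, and that the bounds depend on the $\beta_{e}$ only through, say, $\min_e \beta_e$ or $\sum 1/\beta_e$ in the right direction.

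The second stage is the reduction of LRRW itself to a VRJP with \emph{random} weights. Here I would use the following observation (also from \cite{ST}): run a VRJP with i.i.d.\ random initial edge-weights $\beta_e$ drawn from an appropriate distribution (a Gamma-type law with parameter $a$), and observe only the discrete sequence of vertices visited; this discrete-time jump chain has exactly the law of LRRW with parameter $a$. The point is that the local-time clock of the VRJP, integrated against a random time-change, reproduces the linear reinforcement weights $a + N(e,t)$ after averaging over the random $\beta_e$; the verification is a computation with the transition rates and the Gamma conjugacy that makes $\mathbb{E}[\beta_e \cdot (\text{something})]$ collapse to the linear rule. So an LRRW with small $a$ is a VRJP in which the edge-weights $\beta_e$ are themselves small with high probability — not deterministically small, but the law of $\beta_e$ concentrates near $0$ as $a \to 0$.

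Combining the two stages: choose $a_0$ so small that the random weights $\beta_e$ produced in stage two lie, with full probability on each finite subgraph, in the regime where the (edge-dependent, unbounded) version of the \cite{DS10} estimates from stage one apply. Since the VRJP-with-random-weights is, conditionally on $\{\beta_e\}$, a mixture of reversible chains, and since recurrence of each such chain holds a.s.\ in the small-$\beta$ regime, we conclude that LRRW with $a < a_0$ is recurrent. Some care is needed in the order of the two averagings — one must integrate out the VRJP environment first (for fixed $\beta$) to get recurrence a.s., then integrate over $\beta$ — but because recurrence is a tail event of probability $0$ or $1$ for each fixed environment, Fubini causes no trouble. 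The genuinely hard part, as already stressed, is not the two reductions, each of which is a clean identity, but making the quantitative estimates of \cite{DS10} robust to edge-dependent and unbounded $\beta$; everything else is assembly.
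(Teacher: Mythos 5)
Your proposal is a correct outline of the Sabot--Tarr\`es proof \cite{ST}: pass to the vertex-reinforced jump process, identify its environment density with the horospherical form of the hyperbolic sigma model, import the localization estimates of \cite{DS10}, and recover LRRW as the discrete skeleton of a VRJP with independent Gamma-distributed edge weights. You also correctly isolate the real technical burden --- extending \cite{DS10} to edge-dependent, unbounded $\beta$ --- from the bookkeeping. But this is the route the paper only \emph{describes} in the lead-up to the theorem; it is not the proof the paper gives. The paper's own argument (Angel--Crawford--Kozma \cite{ACK}) avoids the sigma model, the magic formula, and any explicit formula for the environment. It uses only the soft Diaconis--Freedman theorem \cite{DF80}, which represents the (recurrent, partially exchangeable) walk as a mixture of Markov chains, together with a direct probabilistic estimate: fix an edge $e=(x,y)$ and the edge $f$ through which $x$ was first entered; for small $a$ the walk exits $x$ through $f$ many times before first exiting through $e$, and this count forces the ratio of the environment weights of $e$ and $f$ to be small. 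Summing over all possible first-entrance paths and using independence of the ratios along a fixed path yields $\mathbb{E}\left(W_{e}^{1/4}\right)\le(C\sqrt{a})^{\dist(e,x_{0})}$, which gives (strong) recurrence; the fractional power $1/4$ is needed because the weight ratios have infinite expectation, as P\'olya's urn already shows. The trade-off: your route inherits strong quantitative control from the sigma-model machinery and exposes the connection to supersymmetry, at the price of re-verifying heavy analytic input for inhomogeneous unbounded $\beta$; the paper's route is elementary and essentially calculation-free, and tolerates the wasteful union bound over paths because the exponent base $C\sqrt{a}$ can be made arbitrarily small.
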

Simultaneously, with Angel and Crawford, we arrived at the same result
\cite{ACK}. Our approach was completely different and did not use
the magic formula at all. The basic idea is very simple.
\begin{proof}
[Basic idea of the proof]Examine some edge $e=(x,y)$ and assume
you have traversed $e$ for the first time from $x$ to $y$. Let
$f$ be the edge through which you arrived into $x$. If $a$ is extremely
small, then upon arriving to $x$ you see one edge, $f$, with weight
$1+a$, and all the others with weight $a$. Hence you are very likely
to return via $f$. This will repeat on the second, third etc.\ visit
to $x$. At each time, because $f$ has much larger weight than $e$
(its weight is even increasing, but we will not use that), it will
choose to exit $x$ through $f$ multiple times until finally exiting
through $e$. Denote the number of times it exited through $f$ before
the first exit through $e$ by $N$.

We now recall that the process is also a random walk in random environment.
The environment might be complicated, but we have earned a bit of
information about it. We know that the walker exited the vertex $x$
$N$ times through $f$ before the first time it exited through $e$.
This must mean that the weight of $f$ is approximately $N$ times
larger than that of $e$! In other words, we see that the weight of
edges decays exponentially in the ``first entrance distance'', i.e.\ if
we define $f_{2}$ as the edge through which we entered the other
vertex of $f$, $f_{3}$ as the edge through which we entered the
other vertex of $f_{2}$ etc., we finally reach $f_{n}=x_{0}$. Then
the argument says that the weight of $e$ should decay exponentially
in $n$.
\end{proof}
Comparing to the approach of Disertori-Spencer-Sabot-Tarr\`es, our approach
is much softer, and has almost no explicit calculations. We rely on
the existence of a random walk in random environment representation,
i.e.\ on the theorem of Diaconis \& Freedman, but that too, as explained
above, is a soft result.

Trying to formalize this leads to some minor technical difficulties.
The first is that, to get exponential decay, different edge ratios
need to be independent. However, different edges are not independent,
because the ``first entrance path'' is a global property which introduces
dependencies between all quantities. To tackle this difficulty we
use a somewhat brutish approach and simply count over all such paths,
i.e. write
\[
\mathbb{E}\left(W_{e}\right)=\sum_{\gamma}\mathbb{E}\left(W_{e}\cdot\mathbf{1}\{\gamma\mbox{ is the first entrance path}\}\right).
\]
This allows to just fix two edges $e$ and $f$ sharing a vertex $x$,
and define a variable as in the basic idea. These variables (call
them $R_{e,f}$) are independent and have expectation $\le C\sqrt{a}$.
Thus we argue, very roughly, as follows 
\begin{multline}
\mathbb{E}\left(W_{e}\cdot\mathbf{1}\{\gamma\mbox{ is the first entrance path}\}\right)=\\
\mathbb{E}\left(\prod_{i=1}^{\len\gamma-1}R_{(\gamma_{i-1},\gamma_{i}),(\gamma_{i},\gamma_{i+1})}\cdot\mathbf{1}\{\gamma\mbox{ is the first entrance path}\}\right)\le\\
\mathbb{E}\left(\prod_{i=1}^{\len\gamma-1}R_{(\gamma_{i-1},\gamma_{i}),(\gamma_{i},\gamma_{i+1})}\right)=\prod\mathbb{E}R_{(\gamma_{i-1},\gamma_{i}),(\gamma_{i},\gamma_{i+1})}\le\left(C\sqrt{a}\right)^{\len\gamma}.\label{eq:WR}
\end{multline}
The first equality is because under the event that $\gamma$ really
is the first entrance path, $W_{e}$ is really the product of the
$R_{e,f}$ along $\gamma$. The inequality that follows comes from
just throwing this event away. This seems very wasteful, but look
at the final result: we show that the expectation is exponentially
small, and not only that, we can make the exponent base as small as
we want by reducing $a$. So losing a fixed exponential term by counting
over the $\gamma$ is something that our approach can sustain. The
third step in (\ref{eq:WR}) is simply independence, and the fourth
is the estimate that came from the ``basic idea'' sketch above.

Another technical difficulty that might be worth mentioning is that,
even though the weights typically decrease along the first entrance
path, sometimes they increase. Once you incorporate this phenomenon
into the basic sketch, you see that the $R_{e,f}$, despite being
typically $C\sqrt{a}$, in fact have infinite expectation. We solve
this by taking everything to some small fixed power ($\frac{1}{4}$
worked). Thus (\ref{eq:WR}) as written is false, and what we actually
prove is 
\[
\mathbb{E}\left(W_{e}^{1/4}\right)\le(C\sqrt{a})^{\dist(e,x_{0})}.
\]
Returning to our favourite example, P\'olya's urn, we see that the
phenomenon is real: the ratio between the weights of the two edges
has no first moment. So some power must appear in the formulation
of the result, though $\frac{1}{4}$ is not optimal. With these two
problems solved, we were able to show the exponential decay of weights.
Let us state this formally:
\begin{thm}
Let $G$ be a graph with all degrees bounded by $K$. Then there exists
$a_{0}=a_{0}(K)>0$ such that for all $a\in(0,a_{0})$, 
\[
\mathbb{E}\left(W_{e}^{1/4}\right)\leq2K\Big(C(K)\sqrt{a}\Big)^{\dist(e,v_{0})}.
\]

\end{thm}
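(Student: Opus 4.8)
The plan is to reduce the statement, via the random-walk-in-random-environment representation, to an estimate on the weights of a fixed (random) reversible environment, and then to make the ``first entrance path'' heuristic of the previous section quantitative. Let $W$ be the random reversible environment (it exists by Diaconis--Freedman in the recurrent case, and in general by the tightness result of Merkl--Rolles), normalised so that $\sum_e W(e)=1$; in particular $W_e\le 1$ for every edge $e$. Given an edge $e=\{x,y\}$, write $x$ for its endpoint that the walk visits first, and let $\gamma=(v_0=\gamma_0,\gamma_1,\dotsc,\gamma_m=x)$ be the first entrance path: $\gamma_{i-1}\gamma_i$ is the edge through which the walk first entered $\gamma_i$, read off from $x$ back to $v_0$. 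Put $\gamma_{m+1}=y$, so that for each $i$ the edge $\gamma_i\gamma_{i+1}$ is the one whose first traversal coincides with the first visit to $\gamma_{i+1}$, and note $m\ge\dist(e,v_0)$.

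First I would split according to the shape of $\gamma$:
\[
\mathbb{E}\big(W_e^{1/4}\big)=\sum_{\gamma}\mathbb{E}\Big(W_e^{1/4}\,\mathbf{1}\{\gamma\text{ is the first entrance path of }e\}\Big),
\]
where $\gamma$ ranges over walks from $v_0$ ending at an endpoint of $e$. On the event in a given summand a telescoping identity and $W_{\gamma_0\gamma_1}\le 1$ give
\[
W_e=W_{\gamma_m\gamma_{m+1}}=W_{\gamma_0\gamma_1}\prod_{i=1}^{m}\frac{W_{\gamma_i\gamma_{i+1}}}{W_{\gamma_{i-1}\gamma_i}}\le\prod_{i=1}^{m}R_i,\qquad R_i:=\frac{W_{\gamma_i\gamma_{i+1}}}{W_{\gamma_{i-1}\gamma_i}},
\]
so it suffices to understand the ratio, at a vertex $v$, of the weight of the ``outgoing'' edge $g$ to that of the ``incoming'' edge $f$. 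Here is the local estimate. Conditionally on $W$ the walk is an honest Markov chain, and the successive edges by which it leaves $v$ form an i.i.d.\ sequence with law proportional to $W$; if $N$ denotes the number of departures from $v$ through $f$ strictly before the first departure through $g$, a one-line geometric computation gives $\mathbb{P}(N=0\mid W)=W_g/(W_f+W_g)$, hence $R=W_g/W_f\le 2\,\mathbb{P}(N=0\mid W)$ whenever $W_g\le W_f$. On the other hand, on the first-entrance event the walk reaches $v$ for the very first time through $f$, so at that instant $f$ carries weight $1+a$ while every other edge at $v$ carries weight exactly $a$; a short calculation then bounds the \emph{annealed} probability that the walk ever leaves $v$ through $g$ before leaving it through $f$, and feeding this in together with the (heavier-tailed) contribution of the event $W_g>W_f$ yields a per-edge bound of the form $\mathbb{E}\big(R^{1/4}\big)\le C(K)\sqrt{a}$. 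The fourth root is genuinely needed: already for P\'olya's urn the ratio of the two edge weights is distributed like $p/(1-p)$ with $p$ uniform, which has no first moment.

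It remains to assemble these along $\gamma$. I would simply discard the indicator $\mathbf{1}\{\gamma\text{ is the first entrance path}\}$ --- wasteful, but since the final bound will be exponentially small with a base we can push below any threshold by taking $a$ small, losing a fixed exponential factor per step does no harm. One is then left with $\mathbb{E}\big(\prod_i R_i^{1/4}\big)$, and the key structural input is that the local quantities attached to distinct vertices are independent --- in a random walk in random environment the steps taken at different vertices are driven by independent randomness --- so the expectation factorises into $\prod_i\mathbb{E}\big(R_i^{1/4}\big)\le (C(K)\sqrt a)^{m}$ with $m\ge\dist(e,v_0)$. Finally, there are at most $K(K-1)^{\ell-1}\le K^{\ell}$ walks of length $\ell$ from $v_0$ to an endpoint of $e$, so provided $a<a_0(K)$ is small enough that $K\,C(K)\sqrt a<\frac12$ the geometric series over $\gamma$ converges and, after absorbing constants, is bounded by $2K\big(C(K)\sqrt a\big)^{\dist(e,v_0)}$ (the case $\dist(e,v_0)=0$ being trivial since $W_e\le 1$). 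The main obstacle, and the point that needs the most care, is precisely this last assembly: the first entrance path is a global object which a priori couples the ratios $R_i$ at all vertices, and the argument has to be arranged --- by throwing away that event and invoking the independence of the vertex statistics of a random walk in random environment --- so that the coupling does not spoil the exponential gain; taming the heavy tails (hence the exponent $\frac14$, and the somewhat lossy $\sqrt a$ in place of $a$) is what makes this step delicate.
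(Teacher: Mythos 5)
Your overall architecture --- summing over first-entrance paths, telescoping $W_e$ into a product of adjacent-edge ratios, taking the power $1/4$ to tame the heavy tails, and closing with a geometric series over the at most $K^{\ell}$ candidate paths --- is exactly the strategy the paper sketches. The genuine gap is at the step where the expectation of the product factorises. As you have defined them, $R_i=W_{\gamma_i\gamma_{i+1}}/W_{\gamma_{i-1}\gamma_i}$ is a function of the random environment alone, and the environment of LRRW is \emph{not} a product measure over vertices: the paper stresses that, unlike the directed case, the environments at different vertices are typically dependent, and the spanning-tree term in the magic formula is precisely a long-range coupling between the $W$'s. The fact you invoke --- that a random walk in a fixed environment uses independent randomness at distinct vertices --- is a statement about the walk \emph{conditionally on} $W$; it gives no independence whatsoever for functions of $W$ itself, so $\mathbb{E}\bigl(\prod_i R_i^{1/4}\bigr)=\prod_i\mathbb{E}\bigl(R_i^{1/4}\bigr)$ is unjustified.

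Worse, once the indicator of the first-entrance event is discarded, the per-factor bound $\mathbb{E}(R_i^{1/4})\le C(K)\sqrt a$ cannot hold for every fixed walk $\gamma$: by Cauchy--Schwarz $\mathbb{E}(R^{1/4})\,\mathbb{E}(R^{-1/4})\ge1$, so for any unordered pair of adjacent edges at most one orientation of the ratio can have small fractional moment, while your sum ranges over all walks from $v_0$, including those traversing a pair in the ``wrong'' order. The smallness of the per-vertex factor comes entirely from the reinforcement dynamics at the moment of first entry ($f$ carrying weight $1+a$ against $a$ for the others) --- that is, from exactly the information you propose to throw away. The way out, which the paper compresses into ``define a variable as in the basic idea,'' is to define $R_{e,f}$ not as an environment ratio but as a function of the walk's exit sequence at the shared vertex (essentially $1/(N+1)$, with $N$ the number of exits through the entering edge before the first exit through the outgoing one). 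Such a variable dominates the environment ratio in the relevant fractional moment, since conditionally on $W$ the count $N$ is geometric with parameter $W_g/(W_f+W_g)$; its smallness is built into its definition through the first-visit order rather than read off a discarded event; and proving the independence of these variables along $\gamma$ is where the real work of the argument lies. Your proposal assembles the right ingredients, but in an order in which the key inequality is false.
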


\section{Where now?}

The next step to understand the phase transition is to understand
the transient regime. Using our techniques we were able to show the
existence of a transient regime on any non-amenable graph. But getting
to $\mathbb{Z}^{d}$ seems to require a new idea. 

Next one would like to show that the phase transition is unique, namely
that there is some $a_{c}$ such where the process moves from recurrence
to transience, and not, say, that the process switches from recurrence
to transience and back a few times before finally becoming transient
for high enough $a$. In many models proving this kind of fact requires
monotonicity, but there are also other approaches, so e.g.\ \cite{H}.

There are also many other models we would like to know similar results.
We already covered other reinforced walks, but the connection to the
hyperbolic sigma models suggests many more questions. Are there other
sigma models where one can get an interacting representation for random
walk on the generated field? All-in-all this is an exciting and fast-moving
field, with a lot more to understand.

\end{document}